\documentclass[11pt,a4paper]{article}

\usepackage[T1]{fontenc}
\usepackage[utf8]{inputenc}
\usepackage[a4paper,margin=28mm]{geometry}
\usepackage{amsmath,amssymb,amsthm,mathtools}
\usepackage{tikz-cd}
\usepackage[hidelinks]{hyperref}

\theoremstyle{plain}
\newtheorem{theorem}{Theorem}[section]
\newtheorem{proposition}[theorem]{Proposition}
\newtheorem{lemma}[theorem]{Lemma}
\newtheorem{corollary}[theorem]{Corollary}
\theoremstyle{definition}
\newtheorem{definition}[theorem]{Definition}
\newtheorem{example}[theorem]{Example}
\theoremstyle{remark}
\newtheorem{remark}[theorem]{Remark}
\newtheorem{notation}[theorem]{Notation}

\newcommand{\C}{\mathcal{C}}
\newcommand{\D}{\mathcal{D}}
\newcommand{\E}{\mathcal{E}}
\newcommand{\Id}{\mathrm{Id}}
\newcommand{\V}{\mathcal{V}}

\newcommand{\od}{\otimes}
\newcommand{\odprime}{\otimes'}
\newcommand{\oddoubleprime}{\otimes''}
\newcommand{\dist}{\sigma}
\newcommand{\unit}{\mathbf{I}}
\newcommand{\unitprime}{\mathbf{I}'}
\newcommand{\unitdoubleprime}{\mathbf{I}''}
\newcommand{\mul}[3]{\mu^{#1}_{#2,#3}}
\newcommand{\munit}[1]{\mu^{#1}_0}

\title{The Strict 2-Category Structure of Distorted Monoidal Categories}
\author{Joaquim Reizi Higuchi}
\date{\today}

\begin{document}
\maketitle

\begin{abstract}
We develop a typed, constructive framework for \emph{distorted monoidal categories}—monoidal categories equipped with a unit distortion $\Lambda:\Id\Rightarrow\Id$ and a binatural distortion $\sigma_{X,Y}:X\otimes Y\to Y\otimes X$ satisfying hexagon--style axioms without assuming invertibility. With $\sigma$-lax monoidal functors required to satisfy strict compatibilities with $\Lambda$ and $\sigma$, and monoidal natural transformations as 2-cells, these structures assemble into a \emph{strict} 2-category. Strictness holds \emph{on the nose}: for composable $F,G,H$,
\[
\mu^{H(GF)}_{X,Y}=\mu^{(HG)F}_{X,Y}
\quad\text{and}\quad
\mu^{H(GF)}_0=\mu^{(HG)F}_0,
\]
and the interchange law for 2-morphisms is an equality. Conceptually, we identify a 2-monad presentation whose strict algebras, strict morphisms, and 2-cells instantiate the foregoing data; our contribution is the explicit, type-safe calculus that realizes these general facts at the level of concrete diagrammatic proofs. We give robust existence schemes for non-invertible distortions via normalized idempotent twists of a braiding and for nontrivial unit distortions via graded characters, and we exhibit a sharp test case showing that monoidality of $\Lambda$ is independent of the other axioms. The resulting calculus generalizes braided monoidal categories ($\Lambda=\Id$, $\sigma$ invertible) and supports irreversible or resource-sensitive tensorial reasoning, with immediate applicability to directed homotopy, categorical quantum processes with measurement, and non-symmetric operadic situations. The constructions and proofs are designed for formal verification.
\end{abstract}

\section{Introduction}

Monoidal categories form the common language for tensorial phenomena across algebra, topology, quantum theory, and computer science. The symmetric case, where a symmetry $\sigma:X\otimes Y\xrightarrow{\cong} Y\otimes X$ is invertible, is classical \cite{MacLane1971,JoyalStreet1993}. Many natural systems, however, are intrinsically \emph{directed}: they admit canonical ways to \emph{reorder} tensor factors that need not be reversible, while still supporting coherent tensor calculus. Typical sources include concurrency and directed homotopy \cite{Grandis2009}, irreversible processes in categorical quantum mechanics, and algebraic situations where filtrations or truncations introduce loss of information.

\paragraph{From braidings to distortions.}
We propose a flexible but type\mbox{-}safe abstraction we call a \emph{distorted monoidal category}: a monoidal category
\[
(\C,\otimes,\mathbf{I},\alpha,\lambda,\rho)
\]
equipped with a \emph{binary distortion} $\sigma_{X,Y}:X\otimes Y\to Y\otimes X$ obeying \emph{typed} hexagon axioms and a \emph{unit distortion} $\Lambda:\Id_\C\Rightarrow \Id_\C$ obeying monoidality and unit normalizations (Definition~\ref{def:distmoncat}). Unlike braidings, $\sigma$ is not assumed invertible, and $\Lambda$ may be nontrivial. This relaxes symmetry just enough to capture irreversible interchange while preserving a workable coherence calculus.

\subsection{Conceptual position and novelty}

Our main structural result (Theorem~\ref{thm:T1-2cat}) places distorted monoidal categories within the standard 2\mbox{-}categorical paradigm.  
Let $\mathsf S$ be the 2\mbox{-}monad on $\mathbf{Cat}$ whose strict algebras are \emph{strict} monoidal categories and whose pseudoalgebras are (weak) monoidal categories \cite{Lack2010}.  
By freely adjoining to a $\mathsf S$–algebra the additional data $(\sigma,\Lambda)$ satisfying the typed axioms (D1)–(D4), one obtains a 2\mbox{-}monad $\mathsf T_\sigma$.  
Distorted monoidal categories, $\sigma$–lax monoidal functors satisfying strict compatibilities (S$\Lambda$), (S$\sigma$), and monoidal natural transformations are precisely the \emph{$\mathsf T_\sigma$–pseudoalgebras, lax $\mathsf T_\sigma$–morphisms, and $\mathsf T_\sigma$–transformations}.  
In particular, they assemble into a \emph{strict} 2\mbox{-}category because composition of lax $\mathsf T_\sigma$–morphisms is defined on the nose for 2\mbox{-}monads of algebraic type.

The novelty of this paper is therefore \emph{not} the abstract existence of a 2\mbox{-}category, which follows from general 2\mbox{-}monad theory, but the \emph{typed, constructive calculus} that realizes it concretely for non\mbox{-}invertible interchange:
\begin{enumerate}
\item an explicit axiomatics that prevents domain/codomain mismatches;
\item explicit composite laxators whose associativity and unitality hold as \emph{equalities};
\item robust \emph{existence patterns} for non\mbox{-}invertible distortions and nontrivial unit distortions (idempotent twists and graded characters), all checked constructively against (D1)–(D4).
\end{enumerate}
These ingredients make the framework ready for mechanization and for applications where reversibility genuinely fails.

\subsection{Main results}

\begin{enumerate}
\item \textbf{Strict 2\mbox{-}categorical structure (Theorem~\ref{thm:T1-2cat}).}
Objects are distorted monoidal categories, 1\mbox{-}morphisms are $\sigma$–lax monoidal functors satisfying (S$\Lambda$) and (S$\sigma$), and 2\mbox{-}morphisms are monoidal natural transformations. Composition of 1\mbox{-}morphisms uses
\[
\mu^{GF}_{X,Y}:=G(\mu^F_{X,Y})\circ \mu^G_{FX,FY},\qquad 
\mu^{GF}_0:=G(\mu^F_0)\circ \mu^G_0,
\]
and satisfies
\[
\mu^{H(GF)}_{X,Y}=\mu^{(HG)F}_{X,Y},\qquad 
\mu^{H(GF)}_0=\mu^{(HG)F}_0
\]
as literal equalities. Vertical and horizontal compositions of 2\mbox{-}cells and the interchange law hold strictly.

\item \textbf{Typed hexagons and oriented Yang–Baxter.}
The hexagon axioms (D3) are stated with explicit sources and targets. In the strict monoidal case they specialize to \emph{oriented} Yang–Baxter relations, appropriate for non\mbox{-}invertible $\sigma$.

\item \textbf{Existence schemes for non\mbox{-}invertible $\sigma$ (Proposition~\ref{prop:twist-by-e}, Example~\ref{ex:super-e}).}
Given a braided category $(\C,\beta)$ and a binatural, normalized, multiplicative idempotent family $e_{X,Y}$, the composite $\sigma:=\beta\circ e$ satisfies (D1)–(D3) while typically failing to be invertible. This yields coherent, lossy interchanges built from classical braidings.

\item \textbf{Nontrivial unit distortions $\Lambda$ in graded settings (Example~\ref{ex:N-graded-Lambda}).}
For a monoidal grading by a commutative monoid $M$ and any character $\chi:M\to (k^\times,\cdot)$ extended by $0$, the assignment $\Lambda_V|_{V_m}=\chi(m)\cdot \Id$ is natural, unital, and monoidal (D4). This clarifies when nontrivial $\Lambda$ exist and explains their absence in ungraded $\mathbf{Vect}_k$ and $\mathbf{Set}$.

\item \textbf{Correctness of examples and necessity of axioms (Example~\ref{ex:D4-counter}).}
We give fully verified examples and a sharp counterexample showing that (D4) is independent of (D1)–(D3).
\end{enumerate}

\subsection{Techniques and verification strategy}

We adopt a proof discipline that scales to formalization:
\begin{enumerate}
\item all equalities are typed;
\item binaturality of $\sigma$ and the laxators is used via explicit naturality lemmas;
\item composite structures are verified by factorizing through the associativity/unit axioms of $F$ and $G$ and reassembling via naturality; and
\item when constructing non\mbox{-}invertible $\sigma$, we isolate the \emph{idempotent} data responsible for loss and impose a 2\mbox{-}cocycle law ensuring the hexagons.
\end{enumerate}

\subsection{Relationship to existing work}

When $\sigma$ is invertible and $\Lambda=\Id$, we recover braided monoidal categories \cite{JoyalStreet1993}.  
Our distortions are related to, but distinct from, pre\mbox{-}braidings, skew\mbox{-}monoidal categories, and lax centers: the present axioms require no invertibility, treat unit distortions explicitly, and enforce typed hexagons.  
On the structural side, our 2\mbox{-}categorical viewpoint matches the general fact that pseudoalgebras and lax morphisms for a 2\mbox{-}monad form a strict 2\mbox{-}category \cite{Lack2010}; we instantiate this concretely for $\mathsf T_\sigma$ and provide verified existence schemes for non\mbox{-}invertible interchanges.  
The broader bicategorical picture, including oplax morphisms and double\mbox{-}categorical envelopes, connects to \cite{DayStreet1997,Shulman2008} and is developed further in Section~\ref{sec:discussion}.

\subsection{Organization}

Section~\ref{sec:preliminaries} fixes notation and states the axioms.  
Section~\ref{sec:auxiliary} proves the naturality and composition lemmas used repeatedly.  
Section~\ref{sec:main} establishes the strict 2\mbox{-}category structure, including on\mbox{-}the\mbox{-}nose associativity/unitality for composite laxators and the interchange law.  
Section~\ref{sec:examples} provides verified constructions of non\mbox{-}invertible $\sigma$ and nontrivial $\Lambda$, and a counterexample showing the independence of (D4).  
Finally, Section~\ref{sec:discussion} presents the 2\mbox{-}monad semantics in detail, outlines oplax and pseudo variants, and lists open problems including a coherence program for typed hexagons.

\section{Preliminaries and Notation}
\label{sec:preliminaries}

We work in the category $\mathbf{Cat}$ of (locally small) categories. All functors and natural transformations are assumed to be well-defined in this context. We adopt standard conventions from \cite{MacLane1971, Kelly2005} for monoidal categories, functors, and natural transformations.

\begin{definition}[Distorted monoidal category]\label{def:distmoncat}
A \emph{distorted monoidal category} is a tuple
\[
(\C,\od,\unit,\alpha,\lambda,\rho,\Lambda,\dist)
\]
where $(\C,\od,\unit,\alpha,\lambda,\rho)$ is a monoidal category (not assumed strict) with associator $\alpha$ and unitors $\lambda,\rho$, together with
\begin{itemize}
  \item a natural transformation $\Lambda:\Id_\C\Rightarrow \Id_\C$ (\emph{unit distortion});
  \item a natural transformation $\dist_{X,Y}:X\od Y\to Y\od X$ (\emph{binary distortion});
\end{itemize}
subject to the following \emph{distorted coherence axioms}:
\begin{enumerate}
  \item[(D1)] \textbf{Binaturality:} for all morphisms $f:X\to X'$ and $g:Y\to Y'$,
  \[
  (g\od f)\circ \dist_{X,Y}\;=\;\dist_{X',Y'}\circ (f\od g).
  \]
  \item[(D2)] \textbf{Unit normalization:} $\dist_{X,\unit}=\lambda_X^{-1}\circ \rho_X$ and $\dist_{\unit,X}=\rho_X^{-1}\circ \lambda_X$; moreover $\Lambda_\unit=\Id_{\unit}$.
  \item[(D3)] \textbf{Hexagons (typed, two forms):} for all $X,Y,Z$,
  \begin{align*}
  \dist_{X\od Y,\,Z}
  \;=\;&
  \alpha_{Z,X,Y}\circ (\dist_{X,Z}\od \Id_Y)\circ \alpha^{-1}_{X,Z,Y}\circ (\Id_X\od \dist_{Y,Z})\circ \alpha_{X,Y,Z},\\[2mm]
  \dist_{X,\,Y\od Z}
  \;=\;&
  \alpha^{-1}_{Y,Z,X}\circ (\Id_Y\od \dist_{X,Z})\circ \alpha_{Y,X,Z}\circ (\dist_{X,Y}\od \Id_Z)\circ \alpha^{-1}_{X,Y,Z}.
  \end{align*}
  \item[(D4)] \textbf{Monoidality of $\Lambda$:} $\Lambda_{X\od Y}=\Lambda_X\od \Lambda_Y$ for all $X,Y$.
\end{enumerate}
No symmetry or invertibility of $\dist$ is required.
\end{definition}

\begin{remark}
The hexagon axioms (D3) are carefully typed to ensure that domains and codomains match. Each equality is an identity of morphisms with the displayed source and target. When $\dist$ is invertible, these axioms reduce to the standard braiding hexagons \cite{JoyalStreet1993}.
\end{remark}

\begin{remark}\label{rem:D4-redundancy}
The commutation of $\Lambda$ with $\dist$,
\[
(\Lambda_Y\od\Lambda_X)\circ\dist_{X,Y}\;=\;\dist_{X,Y}\circ(\Lambda_X\od\Lambda_Y),
\]
is \emph{not} an axiom: it follows directly from binaturality (D1) by taking $f=\Lambda_X$ and $g=\Lambda_Y$. We therefore retain only $\Lambda_\unit=\Id_\unit$ (in (D2)) and the multiplicativity $\Lambda_{X\od Y}=\Lambda_X\od\Lambda_Y$ (in (D4)) as axioms.
\end{remark}

\begin{definition}[$\dist$-lax monoidal functor]\label{def:s-lax-functor}
Let $(\C,\od,\unit,\alpha,\lambda,\rho,\Lambda,\dist)$ and $(\D,\odprime,\unitprime,\alpha',\lambda',\rho',\Lambda',\dist')$ be distorted monoidal categories. A functor
$F:\C\to\D$ is \emph{$\dist$-lax monoidal} if it is equipped with structure maps
\[
\mul{F}{X}{Y}:F X\odprime F Y \to F(X\od Y),\qquad \munit{F}:\unitprime\to F\unit,
\]
natural in $X,Y$, satisfying the standard lax associativity and unit axioms (see \cite{Kelly2005}) and the two \emph{distortion compatibility axioms}
\begin{align}
\text{(S$\Lambda$)}\quad & F(\Lambda_Z)=\Lambda'_{FZ}\quad\text{for all }Z,\label{eq:SLambda}\\
\text{(S$\sigma$)}\quad & F(\dist_{X,Y})\circ \mul{F}{X}{Y}\;=\;\mul{F}{Y}{X}\circ \dist'_{FX,FY}\quad\text{for all }X,Y.\label{eq:Ssigma}
\end{align}
\end{definition}

\begin{remark}[Scope of (S$\Lambda$) and why we keep it strict]\label{rem:scope-SLambda}
Throughout we impose the strict condition
\begin{equation}\label{eq:SLambda-again}
F(\Lambda_Z)=\Lambda'_{FZ}\qquad(Z\in\C).
\end{equation}
This delivers a \emph{strict} 2\mbox{-}category in which composite laxators agree on the nose and all 2\mbox{-}cell interchange laws hold as equalities.

A tempting “bicategorical weakening’’ is to replace \eqref{eq:SLambda-again} by an isomorphism
$\epsilon:\Lambda'F\Rightarrow F\Lambda$.  However, as $\Lambda,\Lambda'$ are endo\mbox{-}natural transformations on the identity (not endofunctors), the expressions $\Lambda'F$ and $F\Lambda$ are themselves endo\mbox{-}natural transformations on $F$ (2\mbox{-}cells in $\mathbf{Cat}$), not functors (1\mbox{-}cells).  Introducing an isomorphism between them would require a level of 3\mbox{-}cell (modification) structure not present in our ambient 2\mbox{-}category.  A \emph{well\mbox{-}typed} weak theory must therefore \emph{change the underlying data}: replace the endo\mbox{-}transformations $\Lambda,\Lambda'$ by chosen \emph{monoidal endofunctors} $L_\C,L_\D$ together with invertible unit comparisons $\eta_\C:\Id_\C\Rightarrow L_\C$ and $\eta_\D:\Id_\D\Rightarrow L_\D$; a 1\mbox{-}cell is then a $\sigma$–lax monoidal $F:\C\to\D$ equipped with an invertible, monoidal 2\mbox{-}cell
\[
\epsilon:\ L_\D\circ F\ \Rightarrow\ F\circ L_\C
\]
satisfying the standard tensor and unit coherences.  This yields a genuine bicategorical setting but changes the objects and 1\mbox{-}cells studied here.  Developing that variant lies beyond the scope of the present strictly typed core.

\emph{Motivation.} The strict law \eqref{eq:SLambda-again} is restrictive exactly when $\Lambda$ is nontrivial.  For the forgetful $U:\mathbf{Vect}^{\mathbb N}_k\to\mathbf{Vect}_k$ and the graded character $\Lambda^{(t)}$ of Example~\ref{ex:N-graded-Lambda} with $t\neq 1$, we have $\Lambda'=\Id$ on $\mathbf{Vect}_k$ but $U(\Lambda^{(t)}_V)\neq \Id_{U(V)}$ in general, so $U$ fails (S$\Lambda$).  In $\mathbf{SVect}_k$, taking the \emph{sign} endo\mbox{-}transformation $\Lambda_V=\Id_{V_0}\oplus(-\Id_{V_1})$ yields $U(\Lambda_V)\neq \Id_{U(V)}$ unless $\mathrm{char}\,k=2$.  These examples justify keeping (S$\Lambda$) strict in this paper, reserving a sound weak alternative for future work that explicitly promotes $\Lambda$ to a monoidal endofunctor.
\end{remark}

\begin{definition}[Monoidal natural transformation]\label{def:mon-nat}
Let $F,G:\C\to\D$ be $\dist$-lax monoidal functors with structures $(\mul{F}{-}{-},\munit{F})$ and $(\mul{G}{-}{-},\munit{G})$. A natural transformation
$\theta:F\Rightarrow G$ is \emph{monoidal} if for all $X,Y$ the following commute:
\begin{align}
\mul{G}{X}{Y}\circ(\theta_X\odprime \theta_Y)\;=\;\theta_{X\od Y}\circ \mul{F}{X}{Y},\label{eq:mon-nat-tensor}\\
\munit{G}\;=\;\theta_\unit\circ \munit{F}.\label{eq:mon-nat-unit}
\end{align}
Moreover, as a consequence of the naturality of $\Lambda'$ and \eqref{eq:SLambda} for $F$ and $G$, for all $Z$ one has
\[
\theta_Z\circ F(\Lambda_Z)\;=\;G(\Lambda_Z)\circ\theta_Z.
\]
\end{definition}

\begin{remark}
In the original draft, the final statement was phrased as ``equivalently.'' We have rephrased it to clarify that the $\Lambda$-compatibility of $\theta$ is a \emph{consequence} of the monoidal axioms and the strict $\Lambda$-compatibility of $F$ and $G$, rather than an independent axiom.
\end{remark}

\begin{notation}
We write $(GF)$ or $G\circ F$ for the composite functor. Its composite laxators are
\[
\mul{GF}{X}{Y}:= G\big(\mul{F}{X}{Y}\big)\circ \mul{G}{FX}{FY},\qquad
\munit{GF}:= G(\munit{F})\circ \munit{G}.
\]
For monoidal categories $\C,\D,\E$ with tensor products $\od,\odprime,\oddoubleprime$ and units $\unit,\unitprime,\unitdoubleprime$, we use primes to distinguish structures. Associators and unitors are denoted $\alpha,\alpha',\alpha''$ and $\lambda,\rho,\lambda',\rho',\lambda'',\rho''$ respectively.
\end{notation}

\section{Auxiliary Lemmas}
\label{sec:auxiliary}

We now state and prove auxiliary lemmas establishing the basic properties of composite functors, monoidal natural transformations, and their compositions. All proofs are fully constructive.

\begin{lemma}[Composite $\dist$-lax data]\label{lem:comp-def}
With notation above, $(GF,\mul{GF}{-}{-},\munit{GF})$ is a $\dist$-lax monoidal functor.
\end{lemma}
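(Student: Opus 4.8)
The plan is to check, clause by clause, that the tuple $(GF,\mul{GF}{-}{-},\munit{GF})$ meets Definition~\ref{def:s-lax-functor}. That $GF$ is a functor is immediate. The uniform device for every remaining axiom is the same: collapse adjacent $G$-images by functoriality, $G(a)\circ G(b)=G(a\circ b)$; rewrite the inner composite by the corresponding axiom for $F$; apply the corresponding axiom for $G$ at the transported objects $FX,FY,\dots$; and reassemble using the naturality of $\mul{G}{-}{-}$. The simplest instance is naturality of $\mul{GF}{-}{-}$ itself: for $f\colon X\to X'$, $g\colon Y\to Y'$, the required square factors as the naturality square of $\mul{G}{-}{-}$ at $(Ff,Fg)$ glued, along $G(\mul{F}{X}{Y})$, to the $G$-image of the naturality square of $\mul{F}{-}{-}$ at $(f,g)$.

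The main obstacle is lax associativity, where the typed bookkeeping is heaviest. Substituting the definitions into the associativity coherence for $GF$ produces a large diagram that I would split into three commuting regions: (i) the lax associativity of $G$ evaluated at $(FX,FY,FZ)$, which bridges $\alpha''_{(GF)X,(GF)Y,(GF)Z}$ to $G(\alpha'_{FX,FY,FZ})$; (ii) the $G$-image of the lax associativity of $F$, which supplies $G\big(F(\alpha_{X,Y,Z})\big)=(GF)(\alpha_{X,Y,Z})$; and (iii) two naturality squares of $\mul{G}{-}{-}$ used to slide the tensor factors $G(\mul{F}{X}{Y})\oddoubleprime\Id$ and $\Id\oddoubleprime G(\mul{F}{Y}{Z})$ past the outer laxators. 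The point requiring care is that interior morphisms such as $\mul{G}{FX\odprime FY}{FZ}$ and $G(\mul{F}{X\od Y}{Z})$ sit at distinct objects and are connected precisely by naturality of $\mul{G}{-}{-}$ along $\mul{F}{X}{Y}$; keeping every source and target explicit is what makes the three regions paste into the desired equality. The left and right unit axioms follow the same template but are shorter: each decomposes into the corresponding unit coherence for $G$ (involving $\munit{G}$), the $G$-image of the unit coherence for $F$, and a single naturality square of $\mul{G}{-}{-}$ along $\munit{F}$, combined with $\munit{GF}=G(\munit{F})\circ\munit{G}$.

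The two distortion compatibilities are short direct computations and present no difficulty, so the whole obstacle is concentrated in the associativity paste above. For (S$\Lambda$),
\[
(GF)(\Lambda_Z)=G\big(F(\Lambda_Z)\big)=G(\Lambda'_{FZ})=\Lambda''_{G(FZ)}=\Lambda''_{(GF)Z},
\]
applying \eqref{eq:SLambda} first for $F$ and then for $G$. For (S$\sigma$) I would chain
\begin{align*}
(GF)(\dist_{X,Y})\circ\mul{GF}{X}{Y}
&= G\big(F(\dist_{X,Y})\circ\mul{F}{X}{Y}\big)\circ\mul{G}{FX}{FY}\\
&= G\big(\mul{F}{Y}{X}\circ\dist'_{FX,FY}\big)\circ\mul{G}{FX}{FY}\\
&= G(\mul{F}{Y}{X})\circ G(\dist'_{FX,FY})\circ\mul{G}{FX}{FY}\\
&= G(\mul{F}{Y}{X})\circ\mul{G}{FY}{FX}\circ\dist''_{(GF)X,(GF)Y}\\
&= \mul{GF}{Y}{X}\circ\dist''_{(GF)X,(GF)Y},
\end{align*}
using functoriality of $G$, then \eqref{eq:Ssigma} for $F$, functoriality again, \eqref{eq:Ssigma} for $G$ at the objects $FX,FY$, and finally the definition of $\mul{GF}{Y}{X}$. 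This closes the verification.
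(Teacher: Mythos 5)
Your proposal is correct and follows essentially the same route as the paper: the associativity pentagon is handled by the same decomposition into two naturality squares of $\mul{G}{-}{-}$ (along $\mul{F}{X}{Y}$ and $\mul{F}{Y}{Z}$), the $G$-image of $F$'s pentagon, and $G$'s pentagon at $(FX,FY,FZ)$; the unit triangles, (S$\Lambda$), and (S$\sigma$) are verified by the identical computations. The only (harmless) addition is your explicit check of naturality of $\mul{GF}{-}{-}$, which the paper leaves implicit.
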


\begin{proof}
Recall
\[
\mul{GF}{X}{Y}:=G\!\big(\mul{F}{X}{Y}\big)\circ \mul{G}{FX}{FY},
\qquad
\munit{GF}:=G(\munit{F})\circ \munit{G}.
\]
We verify: associativity, unit triangles, $(\mathrm{S}\Lambda)$, and $(\mathrm{S}\sigma)$.

\medskip
\noindent\textbf{1) Associativity pentagon.}
For all $X,Y,Z$ we must show
\begin{equation}\label{eq:assoc-GF}
GF(\alpha_{X,Y,Z})\circ \mul{GF}{X\od Y}{Z}\circ\big(\mul{GF}{X}{Y}\oddoubleprime \Id\big)
=
\mul{GF}{X}{Y\od Z}\circ\big(\Id\oddoubleprime \mul{GF}{Y}{Z}\big)\circ \alpha''_{GFX,GFY,GFZ}.
\end{equation}
Expand $\mul{GF}{-}{-}$ on the left-hand side and use Lemma~\ref{lem:naturality-mu} (naturality of $\mul{G}{-}{-}$) with
$u:=\mul{F}{X}{Y}$, $v:=\Id_{FZ}$ to rewrite
\[
\mul{G}{F(X\od Y)}{FZ}\circ\big(G(\mul{F}{X}{Y})\oddoubleprime \Id\big)
=
G(\mul{F}{X}{Y}\odprime \Id)\circ \mul{G}{FX\odprime FY}{FZ}.
\]
Then apply the associativity axiom for $F$,
\[
F(\alpha)\circ \mul{F}{X\od Y}{Z}\circ(\mul{F}{X}{Y}\odprime \Id)
=
\mul{F}{X}{Y\od Z}\circ(\Id\odprime \mul{F}{Y}{Z})\circ \alpha'_{FX,FY,FZ},
\]
followed by (i) Lemma~\ref{lem:naturality-mu} with $u:=\Id_{FX}$, $v:=\mul{F}{Y}{Z}$ and
(ii) the associativity axiom for $G$:
\[
G(\alpha')\circ \mul{G}{FX\odprime FY}{FZ}\circ(\mul{G}{FX}{FY}\oddoubleprime \Id)
=
\mul{G}{FX}{FY\odprime FZ}\circ(\Id\oddoubleprime \mul{G}{FY}{FZ})\circ \alpha''.
\]
Collecting terms yields \eqref{eq:assoc-GF}.

\medskip
\noindent\textbf{2) Unit triangles.}
We prove the left unit triangle; the right is dual. The required equality is
\begin{equation}\label{eq:left-unit-GF}
GF(\lambda_X)\circ \mul{GF}{\unit}{X}\circ(\munit{GF}\oddoubleprime \Id_{GFX})=\lambda''_{GFX}.
\end{equation}
Expand $\mul{GF}{\unit}{X}$ and $\munit{GF}$:
\begin{align*}
\mathrm{LHS}
&=
G\!\big(F(\lambda_X)\big)\circ
\Big(G(\mul{F}{\unit}{X})\circ \mul{G}{F\unit}{FX}\Big)\\
&\quad\circ
\Big(\big(G(\munit{F})\circ \munit{G}\big)\oddoubleprime \Id\Big)\\
&=
G\!\big(F(\lambda_X)\big)\circ G(\mul{F}{\unit}{X})\circ
\underbrace{\Big(\mul{G}{F\unit}{FX}\circ (G(\munit{F})\oddoubleprime \Id)\Big)}_{(\dagger)}\\
&\quad\circ (\munit{G}\oddoubleprime \Id).
\end{align*}
By Lemma~\ref{lem:naturality-mu} with $u:=\munit{F}:\unitprime\to F\unit$, $v:=\Id_{FX}$,
\[
(\dagger)=G(\munit{F}\odprime \Id)\circ \mul{G}{\unitprime}{FX}.
\]
Hence
\[
\mathrm{LHS}
=
G\!\big(F(\lambda_X)\circ \mul{F}{\unit}{X}\circ(\munit{F}\odprime \Id)\big)\circ
\mul{G}{\unitprime}{FX}\circ (\munit{G}\oddoubleprime \Id).
\]
Now apply the \emph{lax} left unit law for $F$:
\[
F(\lambda_X)\circ \mul{F}{\unit}{X}\circ(\munit{F}\odprime \Id)=\lambda'_{FX},
\]
where $\lambda'_{FX}:\unitprime\odprime FX\to FX$ is the left unitor in $\D$. Therefore
\[
\mathrm{LHS}=G(\lambda'_{FX})\circ \mul{G}{\unitprime}{FX}\circ (\munit{G}\oddoubleprime \Id).
\]
Finally apply the \emph{lax} left unit law for $G$ at the object $FX\in\D$:
\[
G(\lambda'_{FX})\circ \mul{G}{\unitprime}{FX}\circ (\munit{G}\oddoubleprime \Id)=\lambda''_{GFX},
\]
which is exactly \eqref{eq:left-unit-GF}. The right unit triangle follows by the same calculation with $\rho,\rho',\rho''$.

\medskip
\noindent\textbf{3) $(\mathrm{S}\Lambda)$.}
For any $Z$,
\[
(GF)(\Lambda_Z)=G\!\big(F(\Lambda_Z)\big)=G\!\big(\Lambda'_{FZ}\big)=\Lambda''_{GFZ}.
\]

\medskip
\noindent\textbf{4) $(\mathrm{S}\sigma)$.}
For any $X,Y$,
\begin{align*}
(GF)(\dist_{X,Y})\circ \mul{GF}{X}{Y}
&=G\!\big(F(\dist_{X,Y})\big)\circ G\!\big(\mul{F}{X}{Y}\big)\circ \mul{G}{FX}{FY}\\
&=G\!\big(\mul{F}{Y}{X}\big)\circ G\!\big(\dist'_{FX,FY}\big)\circ \mul{G}{FX}{FY}\\
&\quad\text{by $(\mathrm{S}\sigma)$ for $F$}\\
&=G\!\big(\mul{F}{Y}{X}\big)\circ \mul{G}{FY}{FX}\circ \dist''_{GFX,GFY}\\
&\quad\text{by $(\mathrm{S}\sigma)$ for $G$}\\
&=\mul{GF}{Y}{X}\circ \dist''_{GFX,GFY}.
\end{align*}

All axioms hold. Hence $(GF,\mul{GF}{-}{-},\munit{GF})$ is $\dist$-lax monoidal.
\end{proof}

\begin{lemma}[Associativity for $\mul{GF}{-}{-}$]\label{lem:assoc}
The lax associativity pentagon for $\mul{GF}{-}{-}$ commutes whenever it does for $\mul{F}{-}{-}$ and $\mul{G}{-}{-}$.
\end{lemma}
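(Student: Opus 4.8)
The plan is to reduce the associativity pentagon for the composite laxator $\mul{GF}{-}{-}$ to the pentagons already assumed for $\mul{F}{-}{-}$ and $\mul{G}{-}{-}$, using naturality of the outer laxator $\mul{G}{-}{-}$ (Lemma~\ref{lem:naturality-mu}) to shuffle the $G$-images of the $F$-laxators into position. I would begin from the left-hand side of the pentagon for $GF$, namely $GF(\alpha_{X,Y,Z})\circ\mul{GF}{X\od Y}{Z}\circ(\mul{GF}{X}{Y}\oddoubleprime\Id)$, and expand every occurrence of $\mul{GF}{-}{-}$ through its definition $G(\mul{F}{-}{-})\circ\mul{G}{F-}{F-}$.

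First I would apply naturality of $\mul{G}{-}{-}$ with the pair $(u,v)=(\mul{F}{X}{Y},\Id_{FZ})$ to move $G(\mul{F}{X}{Y})\oddoubleprime\Id$ through $\mul{G}{F(X\od Y)}{FZ}$, turning that segment into $G(\mul{F}{X}{Y}\odprime\Id)\circ\mul{G}{FX\odprime FY}{FZ}$. Now every $F$-level laxator sits inside a single application of $G$, so by functoriality of $G$ I can apply the pentagon for $F$ underneath $G$ to rewrite the inner string into its right-associated form. A second use of naturality of $\mul{G}{-}{-}$, this time with $(u,v)=(\Id_{FX},\mul{F}{Y}{Z})$, repositions the remaining $G$-image laxator, after which the pentagon for $G$ at the objects $FX,FY,FZ$ carries the expression to the right-hand side $\mul{GF}{X}{Y\od Z}\circ(\Id\oddoubleprime\mul{GF}{Y}{Z})\circ\alpha''_{GFX,GFY,GFZ}$.

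I expect the main difficulty to be purely bookkeeping: tracking which laxators live inside $G$ versus outside it, and applying each instance of naturality with the correct pair of morphisms and matching source/target types. No new idea is needed — this is exactly the associativity computation already carried out in part~(1) of Lemma~\ref{lem:comp-def} — so the present lemma merely isolates that portion, and I would either cite that computation verbatim or reproduce the four-step rewrite sketched above.
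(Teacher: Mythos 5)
Your proposal is correct and follows exactly the computation the paper gives: it proves this lemma by deferring to part~(1) of Lemma~\ref{lem:comp-def}, whose argument is precisely your four-step rewrite (expand $\mul{GF}{-}{-}$, apply Lemma~\ref{lem:naturality-mu} with $(u,v)=(\mul{F}{X}{Y},\Id_{FZ})$, use the pentagon for $F$ under $G$, apply naturality again with $(u,v)=(\Id_{FX},\mul{F}{Y}{Z})$, and finish with the pentagon for $G$). Citing that computation verbatim, as you suggest, is exactly what the paper does.
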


\begin{proof}
The proof is given in full detail in Lemma~\ref{lem:comp-def}, part (1).
\end{proof}

\begin{lemma}[Unit axioms for $\munit{GF}$]\label{lem:unit}
The lax left and right unit triangles for $(GF)$ hold if they hold for $F$ and $G$.
\end{lemma}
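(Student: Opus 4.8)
The plan is to observe that this lemma is precisely the content of Lemma~\ref{lem:comp-def}, part (2), so the proof proceeds by the same telescoping strategy: expand the composite laxators $\mul{GF}{\unit}{X}$ and $\munit{GF}$ into their definitions, use naturality of $\mul{G}{-}{-}$ to gather all of the $F$-data under a single application of $G$, invoke the lax unit triangle for $F$, and then close with the lax unit triangle for $G$ evaluated at the object $FX\in\D$.

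Concretely, for the left triangle I would first substitute
\[
\mul{GF}{\unit}{X}=G(\mul{F}{\unit}{X})\circ \mul{G}{F\unit}{FX},\qquad \munit{GF}=G(\munit{F})\circ\munit{G}
\]
into $GF(\lambda_X)\circ\mul{GF}{\unit}{X}\circ(\munit{GF}\oddoubleprime\Id_{GFX})$. The first move is to apply Lemma~\ref{lem:naturality-mu} (naturality of $\mul{G}{-}{-}$) with $u:=\munit{F}:\unitprime\to F\unit$ and $v:=\Id_{FX}$, rewriting $\mul{G}{F\unit}{FX}\circ(G(\munit{F})\oddoubleprime\Id)$ as $G(\munit{F}\odprime\Id)\circ\mul{G}{\unitprime}{FX}$. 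This slides every $F$-structure map inside a single $G(-)$, leaving the expression in the form $G\big(F(\lambda_X)\circ\mul{F}{\unit}{X}\circ(\munit{F}\odprime\Id)\big)\circ\mul{G}{\unitprime}{FX}\circ(\munit{G}\oddoubleprime\Id)$. The inner composite is exactly the left unit triangle for $F$, hence equals $\lambda'_{FX}$, so the whole reduces to $G(\lambda'_{FX})\circ\mul{G}{\unitprime}{FX}\circ(\munit{G}\oddoubleprime\Id)$. Applying the left unit triangle for $G$ at the object $FX$ then yields $\lambda''_{GFX}$, as required.

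The right triangle is entirely dual, replacing $\lambda,\lambda',\lambda''$ by $\rho,\rho',\rho''$, the left whiskering by right whiskering, and using Lemma~\ref{lem:naturality-mu} with $u:=\Id_{FX}$, $v:=\munit{F}$. I do not expect a genuine obstacle here beyond careful bookkeeping: the only step demanding attention is the single naturality application, where one must whisker the unit comparison $\munit{F}$ into the correct tensor slot so that the source types $\unitprime\odprime FX$ and $F\unit\odprime FX$ line up before $G$ is applied. Once that square is placed correctly, the two unit laws fire in sequence and the identity holds on the nose.
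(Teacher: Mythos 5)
Your proposal is correct and follows exactly the route the paper takes (its proof of this lemma simply defers to Lemma~\ref{lem:comp-def}, part (2)): expand the composite laxators, apply Lemma~\ref{lem:naturality-mu} with $u:=\munit{F}$, $v:=\Id_{FX}$ to gather the $F$-data under one application of $G$, then fire the lax unit law for $F$ followed by that for $G$ at $FX$. The dual treatment of the right triangle, including the swapped roles of $u$ and $v$, also matches.
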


\begin{proof}
The proof is given in full detail in Lemma~\ref{lem:comp-def}, part (2).
\end{proof}

\begin{lemma}[$\Lambda$-compatibility for $GF$]\label{lem:Lambda-comp}
If $F$ and $G$ satisfy \eqref{eq:SLambda}, then $GF$ satisfies \eqref{eq:SLambda}.
\end{lemma}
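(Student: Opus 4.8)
The plan is to reduce the claim to a direct chain of substitutions, exploiting that (S$\Lambda$) is a strict (on-the-nose) equality rather than a coherence witnessed by a 2-cell. Fix an arbitrary object $Z\in\C$ and write $\Lambda,\Lambda',\Lambda''$ for the unit distortions on $\C,\D,\E$ respectively. The first step is purely functorial: since $GF$ is the composite functor, its action on the endomorphism $\Lambda_Z$ is
\[
(GF)(\Lambda_Z)=G\big(F(\Lambda_Z)\big).
\]

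Next I would invoke the hypothesis (S$\Lambda$) for $F$, namely $F(\Lambda_Z)=\Lambda'_{FZ}$, and substitute it inside $G$ to obtain $G(\Lambda'_{FZ})$. Finally I apply (S$\Lambda$) for $G$, \emph{instantiated at the object $FZ\in\D$}, which reads $G(\Lambda'_{FZ})=\Lambda''_{GFZ}$. Chaining the three equalities gives $(GF)(\Lambda_Z)=\Lambda''_{(GF)Z}$ for every $Z$, which is precisely (S$\Lambda$) for the composite $GF$.

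There is no genuine obstacle here: the argument is a two-step rewrite and is in fact already recorded verbatim in part (3) of Lemma~\ref{lem:comp-def}, so the present lemma merely isolates that computation. The only point demanding attention is the bookkeeping across the three levels of structure—one must apply the compatibility for $G$ at the correct object $FZ$ rather than at $Z$, and track that the resulting distortion lives on $\E$. The strictness of (S$\Lambda$) is exactly what renders the conclusion immediate. Were (S$\Lambda$) weakened to an invertible comparison 2-cell $\epsilon:\Lambda'F\Rightarrow F\Lambda$ in the manner contemplated in Remark~\ref{rem:scope-SLambda}, the composite would instead require pasting the comparison cells of $F$ and $G$ and verifying their coherence; in the strict setting adopted here that complication does not arise, which is precisely the point of keeping (S$\Lambda$) on the nose.
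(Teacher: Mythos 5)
Your proposal is correct and coincides with the paper's own argument: both apply functoriality of the composite, then (S$\Lambda$) for $F$, then (S$\Lambda$) for $G$ instantiated at $FZ$. Your observation that the computation already appears in part (3) of Lemma~\ref{lem:comp-def} is also accurate.
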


\begin{proof}
Let
\[
(\C,\od,\unit,\alpha,\lambda,\rho,\Lambda,\dist)\xrightarrow{\,F\,}
(\D,\odprime,\unitprime,\alpha',\lambda',\rho',\Lambda',\dist')\xrightarrow{\,G\,}
(\E,\oddoubleprime,\unitdoubleprime,\alpha'',\lambda'',\rho'',\Lambda'',\dist'')
\]
be distorted monoidal categories and $\dist$-lax functors.
Assume the strict $\Lambda$-compatibility
\[
F(\Lambda_Z)=\Lambda'_{FZ}\quad\text{and}\quad
G(\Lambda'_Y)=\Lambda''_{GY}\qquad\text{for all }Z\in\C,\; Y\in\D.
\]
Then for any $Z\in\C$,
\[
(GF)(\Lambda_Z)=G\!\big(F(\Lambda_Z)\big)=G\!\big(\Lambda'_{FZ}\big)=\Lambda''_{GFZ},
\]
where the second equality uses \eqref{eq:SLambda} for $F$, and the last equality uses
\eqref{eq:SLambda} for $G$ with $Y=FZ$.
Hence $GF$ satisfies \eqref{eq:SLambda}.
\end{proof}

\begin{lemma}[$\dist$-compatibility for $GF$]\label{lem:sigma-comp}
If $F$ and $G$ satisfy \eqref{eq:Ssigma}, then $GF$ satisfies \eqref{eq:Ssigma}.
\end{lemma}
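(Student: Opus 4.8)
The plan is to verify the compatibility axiom (S$\sigma$) for the composite $GF$ by a direct diagram chase, expanding the composite laxator $\mul{GF}{X}{Y}=G(\mul{F}{X}{Y})\circ\mul{G}{FX}{FY}$ and pushing the distortion $\dist''$ past each factor using the hypotheses (S$\sigma$) for $F$ and for $G$ in turn. Concretely, I would start from the left-hand side $(GF)(\dist_{X,Y})\circ\mul{GF}{X}{Y}$ and unfold both the functor action and the composite laxator, writing everything as a string of four morphisms. The goal is the right-hand side $\mul{GF}{Y}{X}\circ\dist''_{GFX,GFY}$.

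First I would rewrite $(GF)(\dist_{X,Y})=G(F(\dist_{X,Y}))$ and use functoriality of $G$ to combine $G(F(\dist_{X,Y}))\circ G(\mul{F}{X}{Y})=G\big(F(\dist_{X,Y})\circ\mul{F}{X}{Y}\big)$. At this point (S$\sigma$) for $F$ applies inside the argument of $G$, turning this into $G\big(\mul{F}{Y}{X}\circ\dist'_{FX,FY}\big)=G(\mul{F}{Y}{X})\circ G(\dist'_{FX,FY})$, again by functoriality of $G$. The remaining expression is $G(\mul{F}{Y}{X})\circ G(\dist'_{FX,FY})\circ\mul{G}{FX}{FY}$. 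Now I would apply (S$\sigma$) for $G$ to the two rightmost factors $G(\dist'_{FX,FY})\circ\mul{G}{FX}{FY}=\mul{G}{FY}{FX}\circ\dist''_{GFX,GFY}$, which leaves $G(\mul{F}{Y}{X})\circ\mul{G}{FY}{FX}\circ\dist''_{GFX,GFY}$. Recognizing the first two factors as precisely $\mul{GF}{Y}{X}$ by its definition completes the chain.

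Since this mirrors part (4) of Lemma~\ref{lem:comp-def} almost verbatim, I expect no genuine obstacle; the only point demanding care is the \emph{typing} of each intermediate morphism, ensuring that the source and target objects line up at every application of functoriality and of (S$\sigma$). In particular, one must check that $\dist'_{FX,FY}$ is the correct instance to feed into (S$\sigma$) for $G$ — that is, that $G$'s compatibility is invoked at the objects $FX,FY\in\D$ rather than at $X,Y$. This is exactly the kind of domain/codomain bookkeeping the typed calculus is designed to make routine, so the proof reduces to recording the four-step equality with each step labeled by the hypothesis it uses.
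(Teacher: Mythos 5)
Your proposal is correct and follows exactly the same diagram chase as the paper's proof of Lemma~\ref{lem:sigma-comp}: fold the distortion into $G$ by functoriality, apply (S$\sigma$) for $F$ inside $G$, unfold, apply (S$\sigma$) for $G$ at the objects $FX,FY\in\D$, and recognize $\mul{GF}{Y}{X}$. The typing caveat you flag (invoking $G$'s compatibility at $FX,FY$ rather than $X,Y$) is precisely the point the paper's displayed instance \eqref{eq:Ssigma-G} makes explicit.
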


\begin{proof}
Let
\[
(\C,\od,\unit,\alpha,\lambda,\rho,\Lambda,\dist)\xrightarrow{\,F\,}
(\D,\odprime,\unitprime,\alpha',\lambda',\rho',\Lambda',\dist')
\xrightarrow{\,G\,}
(\E,\oddoubleprime,\unitdoubleprime,\alpha'',\lambda'',\rho'',\Lambda'',\dist'')
\]
be distorted monoidal categories and $\dist$-lax functors.
Assume $F$ and $G$ satisfy the distortion compatibility
\begin{equation}\label{eq:Ssigma-F}
F(\dist_{X,Y})\circ \mul{F}{X}{Y}=\mul{F}{Y}{X}\circ \dist'_{FX,FY}
\end{equation}
for all $X,Y\in\C$, and
\begin{equation}\label{eq:Ssigma-G}
G(\dist'_{A,B})\circ \mul{G}{A}{B}=\mul{G}{B}{A}\circ \dist''_{GA,GB}
\end{equation}
for all $A,B\in\D$.
Recall the composite laxator
\[
\mul{GF}{X}{Y}\ :=\ G\!\big(\mul{F}{X}{Y}\big)\circ \mul{G}{FX}{FY}.
\]
We must show, for all $X,Y\in\C$,
\[
(GF)(\dist_{X,Y})\circ \mul{GF}{X}{Y}
\;=\;
\mul{GF}{Y}{X}\circ \dist''_{GFX,GFY}.
\]

\noindent Compute in $\E$:
\begingroup\allowdisplaybreaks
\begin{align*}
&(GF)(\dist_{X,Y})\circ \mul{GF}{X}{Y}\\
&= G\!\big(F(\dist_{X,Y})\big)\ \circ\ \Big(G\!\big(\mul{F}{X}{Y}\big)\circ \mul{G}{FX}{FY}\Big)\\
&= G\!\big(F(\dist_{X,Y})\circ \mul{F}{X}{Y}\big)\ \circ\ \mul{G}{FX}{FY}\\
&\overset{\eqref{eq:Ssigma-F}}{=}\ G\!\big(\mul{F}{Y}{X}\circ \dist'_{FX,FY}\big)\ \circ\ \mul{G}{FX}{FY}\\
&=\ G\!\big(\mul{F}{Y}{X}\big)\circ G\!\big(\dist'_{FX,FY}\big)\circ \mul{G}{FX}{FY}\\
&\overset{\eqref{eq:Ssigma-G}}{=}\ G\!\big(\mul{F}{Y}{X}\big)\circ \mul{G}{FY}{FX}\circ \dist''_{GFX,GFY}\\
&=\ \Big(G\!\big(\mul{F}{Y}{X}\big)\circ \mul{G}{FY}{FX}\Big)\ \circ\ \dist''_{GFX,GFY}\\
&=\ \mul{GF}{Y}{X}\ \circ\ \dist''_{GFX,GFY}.
\end{align*}
\endgroup
This is exactly the required $(\mathrm{S}\sigma)$ for $GF$. Hence $GF$ is $\dist$-compatible.
\end{proof}

\begin{lemma}[Vertical composition of monoidal transformations]\label{lem:vertical}
If $\theta:F\Rightarrow G$ and $\phi:G\Rightarrow H$ are monoidal, then $\phi\circ\theta:F\Rightarrow H$ is monoidal.
\end{lemma}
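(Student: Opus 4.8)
The plan is to verify the two defining conditions of Definition~\ref{def:mon-nat} for the pointwise vertical composite $(\phi\circ\theta)_X := \phi_X\circ\theta_X$, which is already a natural transformation $F\Rightarrow H$ by the usual interchange for vertical composition in $\mathbf{Cat}$. The final $\Lambda$-compatibility clause need not be checked as a separate obligation, since Definition~\ref{def:mon-nat} records it as a \emph{consequence} of \eqref{eq:mon-nat-tensor}, \eqref{eq:mon-nat-unit}, and (S$\Lambda$) for the ambient functors; alternatively it follows at once by composing the $\Lambda$-compatibilities of $\theta$ and $\phi$.

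First I would establish the tensor axiom \eqref{eq:mon-nat-tensor}. The single nonroutine ingredient is bifunctoriality of $\odprime$, which lets me factor the tensor of the composites as a composite of tensors:
\[
(\phi_X\circ\theta_X)\odprime(\phi_Y\circ\theta_Y)=(\phi_X\odprime\phi_Y)\circ(\theta_X\odprime\theta_Y).
\]
Substituting this into $\mul{H}{X}{Y}\circ\big((\phi\circ\theta)_X\odprime(\phi\circ\theta)_Y\big)$ and applying monoidality of $\phi$ (equation \eqref{eq:mon-nat-tensor} for $\phi$) rewrites the outer factor $\mul{H}{X}{Y}\circ(\phi_X\odprime\phi_Y)$ as $\phi_{X\od Y}\circ\mul{G}{X}{Y}$. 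Applying monoidality of $\theta$ to the remaining inner factor $\mul{G}{X}{Y}\circ(\theta_X\odprime\theta_Y)$ turns it into $\theta_{X\od Y}\circ\mul{F}{X}{Y}$. Collapsing $\phi_{X\od Y}\circ\theta_{X\od Y}=(\phi\circ\theta)_{X\od Y}$ then yields exactly the required equality.

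Next I would treat the unit axiom \eqref{eq:mon-nat-unit}, which is a short chain: the unit equation for $\phi$ gives $\munit{H}=\phi_\unit\circ\munit{G}$, and substituting the unit equation for $\theta$, namely $\munit{G}=\theta_\unit\circ\munit{F}$, produces $\munit{H}=\phi_\unit\circ\theta_\unit\circ\munit{F}=(\phi\circ\theta)_\unit\circ\munit{F}$.

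I do not anticipate a genuine obstacle: each axiom is a two-step paste, and the only place where anything beyond direct substitution enters is the bifunctoriality identity above, which supplies precisely the interchange making $(\phi_X\odprime\phi_Y)$ and $(\theta_X\odprime\theta_Y)$ composable in the correct order. For formal verification the only care point is the bookkeeping of sources and targets, so that the outer/inner factorization is applied on the correct side of each composite.
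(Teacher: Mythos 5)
Your proposal is correct and follows essentially the same route as the paper's proof: factor the tensored composite via bifunctoriality of $\odprime$, apply the tensor axioms of $\phi$ and then $\theta$ in turn, and chain the two unit equations. The paper uses the bifunctoriality identity implicitly in its first rewriting step, so your explicit flagging of it is the only (harmless) difference.
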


\begin{proof}
Let $F,G,H:\C\to\D$ be $\dist$-lax monoidal with structure maps
\[
\mul{F}{X}{Y}:FX\odprime FY\to F(X\od Y),\quad
\mul{G}{X}{Y}:GX\odprime GY\to G(X\od Y),
\]
\[
\mul{H}{X}{Y}:HX\odprime HY\to H(X\od Y),
\]
and units $\munit{F}:\unitprime\to F\unit$, $\munit{G}:\unitprime\to G\unit$, $\munit{H}:\unitprime\to H\unit$.
Assume $\theta:F\Rightarrow G$ and $\phi:G\Rightarrow H$ are monoidal, i.e.
\begin{align}
\mul{G}{X}{Y}\circ(\theta_X\odprime\theta_Y)&=\theta_{X\od Y}\circ \mul{F}{X}{Y},\label{eq:mon-theta-V}\\
\munit{G}&=\theta_\unit\circ \munit{F},\notag\\
\mul{H}{X}{Y}\circ(\phi_X\odprime\phi_Y)&=\phi_{X\od Y}\circ \mul{G}{X}{Y},\label{eq:mon-phi-V}\\
\munit{H}&=\phi_\unit\circ \munit{G}.\notag
\end{align}

We show that the vertical composite $(\phi\circ\theta):F\Rightarrow H$ is monoidal by verifying the tensor and unit axioms.

\smallskip
\noindent\textbf{Tensor axiom.}
For any $X,Y\in\C$,
\begingroup\allowdisplaybreaks
\begin{align*}
&\mul{H}{X}{Y}\circ\big((\phi\circ\theta)_X\odprime(\phi\circ\theta)_Y\big)\\
&=\mul{H}{X}{Y}\circ(\phi_X\odprime\phi_Y)\circ(\theta_X\odprime\theta_Y)\\
&\overset{\eqref{eq:mon-phi-V}}{=}\phi_{X\od Y}\circ \mul{G}{X}{Y}\circ(\theta_X\odprime\theta_Y)\\
&\overset{\eqref{eq:mon-theta-V}}{=} \phi_{X\od Y}\circ \theta_{X\od Y}\circ \mul{F}{X}{Y}\\
&=(\phi\circ\theta)_{X\od Y}\circ \mul{F}{X}{Y}.
\end{align*}
\endgroup

\noindent\textbf{Unit axiom.}
\[
\munit{H}
\overset{\eqref{eq:mon-phi-V}}{=}\phi_\unit\circ \munit{G}
\overset{\eqref{eq:mon-theta-V}}{=}\phi_\unit\circ \theta_\unit\circ \munit{F}
=(\phi\circ\theta)_\unit\circ \munit{F}.
\]

Both required equalities hold, hence $\phi\circ\theta$ is monoidal.
\end{proof}

\begin{lemma}[Horizontal composition of monoidal transformations]\label{lem:horizontal}
Let $F_1,F_2:\C\to\D$ and $G_1,G_2:\D\to\E$ be $\dist$-lax monoidal functors and let $\theta:F_1\Rightarrow F_2$, $\phi:G_1\Rightarrow G_2$ be monoidal. Then their horizontal composite
\[
\phi\ast\theta: G_1F_1 \Rightarrow G_2F_2,\qquad
(\phi\ast\theta)_X:=\phi_{F_2X}\circ G_1(\theta_X)=G_2(\theta_X)\circ \phi_{F_1X},
\]
is monoidal.
\end{lemma}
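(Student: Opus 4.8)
The plan is to verify that $\phi\ast\theta$ satisfies the two clauses of Definition~\ref{def:mon-nat}: the tensor axiom \eqref{eq:mon-nat-tensor} and the unit axiom \eqref{eq:mon-nat-unit}, taken relative to the composite laxators $(\mul{G_1F_1}{-}{-},\munit{G_1F_1})$ and $(\mul{G_2F_2}{-}{-},\munit{G_2F_2})$, which are legitimate $\dist$-lax data by Lemma~\ref{lem:comp-def}. The $\Lambda$-compatibility clause of Definition~\ref{def:mon-nat} is then automatic, since $G_1F_1$ and $G_2F_2$ satisfy (S$\Lambda$) by Lemma~\ref{lem:Lambda-comp}. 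I first record that $\phi\ast\theta$ is a well-defined natural transformation: the two displayed formulas $\phi_{F_2X}\circ G_1(\theta_X)$ and $G_2(\theta_X)\circ\phi_{F_1X}$ coincide by naturality of $\phi$ at $\theta_X:F_1X\to F_2X$, and the common value is natural in $X$ by the usual Godement argument. The decisive device in both axioms is to use one of these two equal presentations on the \emph{source} end of the computation and the other on the \emph{target} end, so that the two halves meet in the middle.

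For the tensor axiom I would begin from $\mul{G_2F_2}{X}{Y}\circ\big((\phi\ast\theta)_X\oddoubleprime(\phi\ast\theta)_Y\big)$, expand $\mul{G_2F_2}{X}{Y}=G_2(\mul{F_2}{X}{Y})\circ\mul{G_2}{F_2X}{F_2Y}$, and substitute the form $(\phi\ast\theta)_X=\phi_{F_2X}\circ G_1(\theta_X)$, splitting the tensor by functoriality of $\oddoubleprime$. The rewrites then proceed in sequence: (i) apply monoidality of $\phi$ at $(F_2X,F_2Y)$ to absorb $(\phi_{F_2X}\oddoubleprime\phi_{F_2Y})$ into $\phi_{F_2X\odprime F_2Y}\circ\mul{G_1}{F_2X}{F_2Y}$; (ii) apply naturality of the laxator $\mul{G_1}{-}{-}$ (Lemma~\ref{lem:naturality-mu}) at $u=\theta_X,\ v=\theta_Y$ to slide $\mul{G_1}{-}{-}$ past $(G_1(\theta_X)\oddoubleprime G_1(\theta_Y))$, producing $G_1(\theta_X\odprime\theta_Y)$; (iii) apply naturality of $\phi$ at $\theta_X\odprime\theta_Y$ to convert $\phi_{F_2X\odprime F_2Y}\circ G_1(\theta_X\odprime\theta_Y)$ into $G_2(\theta_X\odprime\theta_Y)\circ\phi_{F_1X\odprime F_1Y}$; and (iv) use functoriality of $G_2$ and monoidality of $\theta$ to rewrite $G_2(\mul{F_2}{X}{Y})\circ G_2(\theta_X\odprime\theta_Y)=G_2(\theta_{X\od Y})\circ G_2(\mul{F_1}{X}{Y})$. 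I would then independently expand the right-hand side $(\phi\ast\theta)_{X\od Y}\circ\mul{G_1F_1}{X}{Y}$ using the \emph{other} form $(\phi\ast\theta)_{X\od Y}=G_2(\theta_{X\od Y})\circ\phi_{F_1(X\od Y)}$ and a single application of naturality of $\phi$ at $\mul{F_1}{X}{Y}$. Both sides reduce to the common expression $G_2(\theta_{X\od Y})\circ G_2(\mul{F_1}{X}{Y})\circ\phi_{F_1X\odprime F_1Y}\circ\mul{G_1}{F_1X}{F_1Y}$.

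The unit axiom follows the same template but is shorter. Expanding $\munit{G_2F_2}=G_2(\munit{F_2})\circ\munit{G_2}$, I would apply monoidality of $\theta$ in the form $\munit{F_2}=\theta_\unit\circ\munit{F_1}$ and of $\phi$ in the form $\munit{G_2}=\phi_{\unitprime}\circ\munit{G_1}$, then naturality of $\phi$ at $\munit{F_1}:\unitprime\to F_1\unit$ to reach $G_2(\theta_\unit)\circ\phi_{F_1\unit}\circ G_1(\munit{F_1})\circ\munit{G_1}$, whose leading factor is exactly $(\phi\ast\theta)_\unit$ and whose tail is $\munit{G_1F_1}$, giving $\munit{G_2F_2}=(\phi\ast\theta)_\unit\circ\munit{G_1F_1}$.

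The main obstacle is not conceptual but the disciplined bookkeeping of sources and targets: one must invoke naturality of $\phi$ twice — once at the interchange morphism $\theta_X\odprime\theta_Y$ and once at the laxator $\mul{F_1}{X}{Y}$ — and choose the correct one of the two equal presentations of $\phi\ast\theta$ on each end so that the middle terms cancel. Notably, no axiom specific to the distortions is needed: (D1)–(D4), (S$\Lambda$), and (S$\sigma$) play no role here, since monoidality of a 2-cell constrains only the laxators; the distortion data enters solely through Lemma~\ref{lem:comp-def}, which guarantees that the ambient composite functors $G_1F_1$ and $G_2F_2$ are genuine objects of the 2-category.
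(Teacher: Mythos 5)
Your proposal is correct and follows essentially the same route as the paper's proof: the same four rewrites (monoidality of $\phi$, naturality of $\mul{G_1}{-}{-}$ at $\theta_X,\theta_Y$, naturality of $\phi$, monoidality of $\theta$) in the tensor axiom, and the same three in the unit axiom. The only cosmetic difference is that you apply naturality of $\phi$ at $\theta_X\odprime\theta_Y$ and at $\mul{F_1}{X}{Y}$ so that both sides meet at a common middle expression, whereas the paper applies it once at $\mul{F_2}{X}{Y}$ and reduces the left-hand side all the way to $(\phi\ast\theta)_{X\od Y}\circ\mul{G_1F_1}{X}{Y}$; both are valid and use the two equal presentations of $\phi\ast\theta$ in the same spirit.
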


\begin{proof}
We use the monoidal structures
\[
\mul{F_i}{X}{Y}:F_iX\odprime F_iY\to F_i(X\od Y),\quad
\mul{G_j}{A}{B}:G_jA\oddoubleprime G_jB\to G_j(A\odprime B),
\]
and units $\munit{F_i}:\unitprime\to F_i\unit$, $\munit{G_j}:\unitdoubleprime\to G_j\unitprime$ $(i,j\in\{1,2\})$.
The composite laxators and unit are
\[
\mul{G_jF_i}{X}{Y}:=G_j\!\big(\mul{F_i}{X}{Y}\big)\circ \mul{G_j}{F_iX}{F_iY},
\]
\[
\munit{G_jF_i}:=G_j(\munit{F_i})\circ \munit{G_j}.
\]

\smallskip
\noindent\textbf{Well-definedness of $(\phi\ast\theta)_X$.}
By naturality of $\phi$ at $m=\theta_X:F_1X\to F_2X$,
\[
\phi_{F_2X}\circ G_1(\theta_X)=G_2(\theta_X)\circ \phi_{F_1X}.
\]

\smallskip
\noindent\textbf{Tensor axiom.}
For any $X,Y\in\C$,
\begingroup\allowdisplaybreaks
\begin{align*}
&\mul{G_2F_2}{X}{Y}\circ\big((\phi\ast\theta)_X\oddoubleprime (\phi\ast\theta)_Y\big)\\
&=\Big(G_2(\mul{F_2}{X}{Y})\circ \mul{G_2}{F_2X}{F_2Y}\Big)\\
&\quad\circ\Big((\phi_{F_2X}\circ G_1(\theta_X))\oddoubleprime (\phi_{F_2Y}\circ G_1(\theta_Y))\Big)\\
&=G_2(\mul{F_2}{X}{Y})
  \circ\Big(\mul{G_2}{F_2X}{F_2Y}\circ(\phi_{F_2X}\oddoubleprime \phi_{F_2Y})\Big)\\
&\quad\circ\big(G_1(\theta_X)\oddoubleprime G_1(\theta_Y)\big)\\
&\overset{\text{mon}(\phi)}{=}
G_2(\mul{F_2}{X}{Y})\circ
\Big(\phi_{F_2X\odprime F_2Y}\circ \mul{G_1}{F_2X}{F_2Y}\Big)\\
&\quad\circ\big(G_1(\theta_X)\oddoubleprime G_1(\theta_Y)\big)\\
&=G_2(\mul{F_2}{X}{Y})\circ \phi_{F_2X\odprime F_2Y}\\
&\quad\circ\Big(\mul{G_1}{F_2X}{F_2Y}\circ\big(G_1(\theta_X)\oddoubleprime G_1(\theta_Y)\big)\Big)\\
&\overset{\text{nat }(\mul{G_1}{-}{-})}{=}
G_2(\mul{F_2}{X}{Y})\circ \phi_{F_2X\odprime F_2Y}\\
&\quad\circ G_1(\theta_X\odprime \theta_Y)\circ \mul{G_1}{F_1X}{F_1Y}\\
&=\phi_{F_2(X\od Y)}\circ
G_1\!\big(\mul{F_2}{X}{Y}\circ(\theta_X\odprime \theta_Y)\big)\\
&\quad\circ\mul{G_1}{F_1X}{F_1Y}\\
&\overset{\text{mon}(\theta)}{=}
\big(\phi_{F_2(X\od Y)}\circ G_1(\theta_{X\od Y})\big)\\
&\quad\circ G_1(\mul{F_1}{X}{Y})\circ \mul{G_1}{F_1X}{F_1Y}\\
&=(\phi\ast\theta)_{X\od Y}\circ \mul{G_1F_1}{X}{Y}.
\end{align*}
\endgroup

\smallskip
\noindent\textbf{Unit axiom.}
We must show
\[
\munit{G_2F_2}\;=\;(\phi\ast\theta)_\unit\circ \munit{G_1F_1}.
\]
Starting from the right-hand side and using the definitions:
\begingroup\allowdisplaybreaks
\begin{align*}
&(\phi\ast\theta)_\unit\circ \munit{G_1F_1}\\
&=\big(\phi_{F_2\unit}\circ G_1(\theta_\unit)\big)\circ\big(G_1(\munit{F_1})\circ \munit{G_1}\big)\\
&=\phi_{F_2\unit}\circ G_1(\theta_\unit\circ \munit{F_1})\circ \munit{G_1}\\
&\overset{\text{unit mon}(\theta)}{=}\phi_{F_2\unit}\circ G_1(\munit{F_2})\circ \munit{G_1}\\
&\overset{\text{nat }(\phi)}{=}\big(G_2(\munit{F_2})\circ \phi_{\unitprime}\big)\circ \munit{G_1}\\
&=G_2(\munit{F_2})\circ \big(\phi_{\unitprime}\circ \munit{G_1}\big)\\
&\overset{\text{unit mon}(\phi)}{=}G_2(\munit{F_2})\circ \munit{G_2}
\;=\;\munit{G_2F_2}.
\end{align*}
\endgroup
Here, ``nat$(\phi)$'' is naturality of $\phi$ at the arrow $\munit{F_2}:\unitprime\to F_2\unit$, and ``unit mon$(\phi)$'' is the unit axiom $\phi_{\unitprime}\circ \munit{G_1}=\munit{G_2}$.

Both tensor and unit axioms hold, so $\phi\ast\theta$ is monoidal.
\end{proof}

\begin{lemma}[Binaturality of $\dist'$]\label{lem:naturality-sigma}
For any morphisms $f:X\to X'$ and $g:Y\to Y'$ in $\D$,
\[
(g\odprime f)\circ \dist'_{X,Y}\;=\;\dist'_{X',Y'}\circ (f\odprime g).
\]
In particular, this applies with $f=\theta_X$, $g=\theta_Y$, and with $f=\phi_{FX}$, $g=\phi_{FY}$ when defined.
\end{lemma}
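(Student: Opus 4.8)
The plan is to observe that the displayed equation is \emph{literally} axiom (D1) of Definition~\ref{def:distmoncat}, read off for the distorted monoidal category $\D$. Concretely, I would expand (D1) with the tensor $\odprime$ and the binary distortion $\dist'$ substituted for $\od$ and $\dist$; this returns
\[
(g\odprime f)\circ\dist'_{X,Y}=\dist'_{X',Y'}\circ(f\odprime g)
\]
for all $f:X\to X'$ and $g:Y\to Y'$ in $\D$, which is exactly the claim. So the proof amounts to invoking that $\D$, as a distorted monoidal category, satisfies its own coherence axioms; no computation is required.

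For the first specialization, I would set $f:=\theta_X$ and $g:=\theta_Y$, where $\theta:F\Rightarrow G$ is a monoidal natural transformation between $\dist$-lax functors $F,G:\C\to\D$. Its components $\theta_X:FX\to GX$ are arrows of $\D$, so (D1) for $\D$ yields
\[
(\theta_Y\odprime\theta_X)\circ\dist'_{FX,FY}=\dist'_{GX,GY}\circ(\theta_X\odprime\theta_Y),
\]
which is precisely the instance used in the tensor-axiom steps of Lemmas~\ref{lem:vertical} and~\ref{lem:horizontal}. The $\phi$-case is handled identically, applying binaturality in whichever distorted monoidal category carries the distortion in question; the qualifier ``when defined'' records that $f$ and $g$ must be arrows of that category.

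Since the assertion is a direct instance of an axiom, I do not expect a genuine obstacle. The only care required—consistent with the paper's typed discipline—is type-correctness: with $f:X\to X'$ and $g:Y\to Y'$, both composites have source $X\odprime Y$ and target $Y'\odprime X'$, so each side is well-formed and the equation type-checks. I would flag this domain/codomain bookkeeping as the single point deserving attention, precisely because the framework is engineered to make such mismatches impossible.
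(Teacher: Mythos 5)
Your proposal is correct and matches the paper's proof exactly: the paper also disposes of this lemma by noting it is a direct instance of axiom (D1) of Definition~\ref{def:distmoncat} applied to $\D$. Your additional remarks on the specializations and type-checking are consistent elaborations of the same one-line argument.
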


\begin{proof}
This follows directly from axiom (D1) of Definition~\ref{def:distmoncat}.
\end{proof}

\begin{lemma}[Binaturality of laxators]\label{lem:naturality-mu}
For any $G:\D\to\E$ $\dist$-lax and any $u:A\to A'$, $v:B\to B'$ in $\D$,
\[
\mul{G}{A'}{B'}\circ\big(G(u)\oddoubleprime G(v)\big)\;=\;G(u\odprime v)\circ \mul{G}{A}{B}.
\]
\end{lemma}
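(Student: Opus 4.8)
The plan is to recognize that the asserted equation is nothing but the naturality square of the laxator $\mul{G}{-}{-}$, which is part of the data of a $\dist$-lax monoidal functor by Definition~\ref{def:s-lax-functor}. There $\mul{G}{-}{-}$ is required to be natural in its two arguments, i.e.\ to be a natural transformation between the two functors $\D\times\D\to\E$ given by $(A,B)\mapsto GA\oddoubleprime GB$ and $(A,B)\mapsto G(A\odprime B)$. Naturality of this transformation at the morphism $(u,v):(A,B)\to(A',B')$ of $\D\times\D$ is precisely the equality
\[
G(u\odprime v)\circ \mul{G}{A}{B}\;=\;\mul{G}{A'}{B'}\circ\big(G(u)\oddoubleprime G(v)\big),
\]
which is the claim (displayed here with the two sides interchanged). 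So if joint naturality is taken as given, there is nothing further to prove.

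The only point requiring care is that Definition~\ref{def:s-lax-functor} phrases the requirement as naturality ``in $X,Y$'', which one may read as separate naturality in each variable. I would therefore spell out the reduction of the two-variable statement to the two one-variable ones. First I would invoke naturality in the first argument with $v$ replaced by $\Id_B$, obtaining $G(u\odprime \Id_B)\circ\mul{G}{A}{B}=\mul{G}{A'}{B}\circ(G(u)\oddoubleprime \Id_{GB})$, and naturality in the second argument with $u$ replaced by $\Id_{A'}$, obtaining $G(\Id_{A'}\odprime v)\circ\mul{G}{A'}{B}=\mul{G}{A'}{B'}\circ(\Id_{GA'}\oddoubleprime G(v))$.

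Then I would assemble these using bifunctoriality. Functoriality of $\odprime$ gives $u\odprime v=(\Id_{A'}\odprime v)\circ(u\odprime \Id_B)$, and functoriality of $G$ splits $G(u\odprime v)$ accordingly; likewise functoriality of $\oddoubleprime$ gives $G(u)\oddoubleprime G(v)=(\Id_{GA'}\oddoubleprime G(v))\circ(G(u)\oddoubleprime \Id_{GB})$. Substituting the first naturality square and then the second produces the desired identity. There is no genuine obstacle here: the statement is a direct unfolding of naturality, and the only nontrivial bookkeeping is keeping the objects $A,A',B,B'$ and their images under $G$ correctly typed through the interchange decomposition---exactly the kind of type discipline the paper emphasizes.
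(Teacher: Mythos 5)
Your proposal is correct and takes essentially the same route as the paper, whose entire proof is the one-line observation that the displayed equality is the naturality square of $\mul{G}{-}{-}$ guaranteed by Definition~\ref{def:s-lax-functor}. Your extra step reducing separate naturality in each variable to joint naturality via bifunctoriality of $\odprime$ and $\oddoubleprime$ is a harmless, formalization-friendly elaboration of the same point.
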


\begin{proof}
By Definition~\ref{def:s-lax-functor}, the structure maps $\mul{G}{X}{Y}$ are natural in $X$ and $Y$, so the displayed equality holds by naturality.
\end{proof}

\begin{lemma}[Naturality of $\phi$]\label{lem:nat-phi}
For any arrow $m:U\to V$ in $\D$, one has
\[
\phi_V\circ G_1(m)\;=\;G_2(m)\circ \phi_U.
\]
\end{lemma}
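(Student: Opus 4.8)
The plan is to observe that this statement is nothing more than the naturality condition already built into the hypothesis on $\phi$. Recall from Lemma~\ref{lem:horizontal} that $\phi:G_1\Rightarrow G_2$ is a \emph{monoidal} natural transformation between the $\dist$-lax monoidal functors $G_1,G_2:\D\to\E$. In particular, forgetting the laxators $\mul{G_j}{-}{-}$ and units $\munit{G_j}$, the datum $\phi$ is a plain natural transformation between the underlying functors $G_1,G_2:\D\to\E$. By the very definition of a natural transformation, its components $\phi_U:G_1U\to G_2U$ make the naturality square commute: for every morphism $m:U\to V$ in $\D$,
\[
\phi_V\circ G_1(m)\;=\;G_2(m)\circ\phi_U.
\]
This is precisely the asserted equality, so the proof consists of a single appeal to this definition.

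Because the only input is the definition of naturality, there is no genuine obstacle here; the monoidality axioms \eqref{eq:mon-nat-tensor} and \eqref{eq:mon-nat-unit} are irrelevant to this identity and play no role. The statement is isolated as a named lemma purely for bookkeeping: it is invoked in Lemma~\ref{lem:horizontal} at the step establishing well-definedness of $(\phi\ast\theta)_X$ (applied with $m=\theta_X$), and again in the unit-axiom computation there (applied with $m=\munit{F_2}$, the arrow labeled ``nat $(\phi)$''). Factoring it out keeps those calculations' type bookkeeping transparent without re-deriving naturality in place.
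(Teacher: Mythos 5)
Your proof is correct and matches the paper's exactly: both simply invoke the defining naturality square of $\phi:G_1\Rightarrow G_2$, with the monoidality axioms playing no role. Your added remarks on where the lemma is invoked are accurate but not needed.
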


\begin{proof}
This is the defining property of a natural transformation $\phi:G_1\Rightarrow G_2$.
\end{proof}

\begin{lemma}[Monoidality of $\theta$]\label{lem:mon-theta}
For monoidal $\theta:F_1\Rightarrow F_2$ and all $X,Y$,
\[
\mul{F_2}{X}{Y}\circ(\theta_X\odprime\theta_Y)\;=\;\theta_{X\od Y}\circ \mul{F_1}{X}{Y}.
\]
\end{lemma}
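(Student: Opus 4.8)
The plan is to recognize that the displayed equation is nothing more than the tensor-compatibility clause \eqref{eq:mon-nat-tensor} of Definition~\ref{def:mon-nat}, read with $F := F_1$ as the source functor and $G := F_2$ as the target functor. The sole hypothesis of the lemma is precisely that $\theta : F_1 \Rightarrow F_2$ is monoidal, so this clause is available by assumption. Thus no computation is required: the proof is a single invocation of the definition, and I would present it in one line.

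Before quoting the definition I would first run a typing check to confirm that both composites are legal arrows with matching source and target, since this is the only place where anything could fail. The left-hand side $\mul{F_2}{X}{Y}\circ(\theta_X\odprime\theta_Y)$ has domain $F_1X\odprime F_1Y$ (the domain of $\theta_X\odprime\theta_Y$) and codomain $F_2(X\od Y)$ (the codomain of $\mul{F_2}{X}{Y}$); the right-hand side $\theta_{X\od Y}\circ\mul{F_1}{X}{Y}$ has the same domain $F_1X\odprime F_1Y$ and the same codomain $F_2(X\od Y)$. With the types confirmed to agree, the equality is verbatim \eqref{eq:mon-nat-tensor}. There is therefore no genuine obstacle here; the lemma exists purely to name this instance under a convenient label so that it can be cited directly in the horizontal-composition argument of Lemma~\ref{lem:horizontal}, where it is used at the step marked $\mathrm{mon}(\theta)$.
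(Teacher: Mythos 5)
Your proposal is correct and matches the paper's proof exactly: the paper also disposes of this lemma in one line by noting that the displayed equality is precisely the tensor axiom \eqref{eq:mon-nat-tensor} of Definition~\ref{def:mon-nat} applied to $\theta:F_1\Rightarrow F_2$. Your additional typing check is sound and harmless, but not needed beyond the citation of the definition.
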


\begin{proof}
This is precisely the tensor axiom in Definition~\ref{def:mon-nat}.
\end{proof}

\begin{lemma}[Identity 1-morphisms are $\dist$-lax]\label{lem:id-lax}
For each distorted monoidal category $\C$, the identity functor $\Id_\C$ with structure $\mul{\Id}{X}{Y}:=\Id_{X\od Y}$ and $\munit{\Id}:=\Id_\unit$ is $\dist$-lax monoidal.
\end{lemma}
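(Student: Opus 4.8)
The plan is to verify directly that the data $(\Id_\C,\mul{\Id}{-}{-},\munit{\Id})$ with $\mul{\Id}{X}{Y}=\Id_{X\od Y}$ and $\munit{\Id}=\Id_\unit$ satisfy each defining condition of Definition~\ref{def:s-lax-functor}, specializing the codomain to $\D=\C$ so that every primed structure coincides with its unprimed counterpart: $\odprime=\od$, $\unitprime=\unit$, $\alpha'=\alpha$, $\lambda'=\lambda$, $\rho'=\rho$, $\Lambda'=\Lambda$, and $\dist'=\dist$. Because $\Id_\C$ acts as the identity on every object and morphism, and both laxators are themselves identity morphisms, I expect each axiom to collapse on the nose to a trivially true equation. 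There is no genuine obstacle here; the content of the lemma is type-level bookkeeping, confirming that the prospective identity $1$-morphism of the $2$-category of Theorem~\ref{thm:T1-2cat} is well-formed and $\dist$-lax.

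Concretely, I would proceed axiom by axiom. Naturality of $\mul{\Id}{-}{-}$ in both arguments holds because, for $f:X\to X'$ and $g:Y\to Y'$, both legs of the naturality square equal $f\od g$. For the associativity pentagon, substituting the identity laxators and using $\Id(\alpha_{X,Y,Z})=\alpha_{X,Y,Z}$ together with functoriality of $\od$ (so that $\Id_{X\od Y}\od\Id_Z=\Id_{(X\od Y)\od Z}$) collapses both composites to $\alpha_{X,Y,Z}$. The left and right unit triangles reduce similarly to $\lambda_X=\lambda_X$ and $\rho_X=\rho_X$ after cancelling the intervening identities.

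For the distortion compatibilities, (S$\Lambda$) reads $\Id_\C(\Lambda_Z)=\Lambda_{\Id_\C Z}$, i.e.\ $\Lambda_Z=\Lambda_Z$, and (S$\sigma$) reads $\Id_\C(\dist_{X,Y})\circ\Id_{X\od Y}=\Id_{Y\od X}\circ\dist_{\Id_\C X,\Id_\C Y}$, which is just $\dist_{X,Y}=\dist_{X,Y}$. Assembling these checks establishes that $\Id_\C$ is $\dist$-lax monoidal. The one point worth flagging—a matter of type-hygiene rather than difficulty—is to keep the identification of target with source structures explicit throughout, since the strictness assertions of Theorem~\ref{thm:T1-2cat} rely on these identity laxators being \emph{literal} identities and not merely isomorphisms.
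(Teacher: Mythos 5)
Your proof is correct and takes essentially the same route as the paper, which simply observes that all axioms (associativity, units, (S$\Lambda$), (S$\sigma$)) reduce to identities; you merely spell out the collapse axiom by axiom. The explicit care about identifying primed and unprimed structures is good type-hygiene but adds nothing beyond the paper's one-line argument.
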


\begin{proof}
All axioms are trivially satisfied: associativity, units, (S$\Lambda$), and (S$\sigma$) reduce to identities.
\end{proof}

\begin{lemma}[Identity 2-morphisms are monoidal]\label{lem:id-monoidal}
For any $\dist$-lax functor $F$, the identity natural transformation $\Id_F$ is monoidal.
\end{lemma}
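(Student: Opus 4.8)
The plan is to apply Definition~\ref{def:mon-nat} directly with $G=F$ and $\theta=\Id_F$, so that every component is $(\Id_F)_Z=\Id_{FZ}$, and then to observe that both defining equalities collapse to tautologies. There are exactly two conditions to verify: the tensor axiom~\eqref{eq:mon-nat-tensor} and the unit axiom~\eqref{eq:mon-nat-unit}. The $\Lambda$-compatibility clause of Definition~\ref{def:mon-nat} requires no separate work, since it is a formal consequence of the monoidal axioms and (S$\Lambda$) for $F$.

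For the tensor axiom, I would substitute $\theta_X=\Id_{FX}$ and $\theta_Y=\Id_{FY}$ into the required identity, obtaining
\[
\mul{F}{X}{Y}\circ\big(\Id_{FX}\odprime\Id_{FY}\big)\;=\;\Id_{F(X\od Y)}\circ\mul{F}{X}{Y}.
\]
The single non-tautological ingredient is that $\odprime$ is a bifunctor, hence preserves identities: $\Id_{FX}\odprime\Id_{FY}=\Id_{FX\odprime FY}$. Granting this, the left-hand side equals $\mul{F}{X}{Y}\circ\Id_{FX\odprime FY}=\mul{F}{X}{Y}$ and the right-hand side equals $\Id_{F(X\od Y)}\circ\mul{F}{X}{Y}=\mul{F}{X}{Y}$ by the identity laws for composition, so both sides coincide.

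For the unit axiom, substituting $\theta_\unit=\Id_{F\unit}$ reduces~\eqref{eq:mon-nat-unit} to
\[
\munit{F}\;=\;\Id_{F\unit}\circ\munit{F},
\]
which holds by the left identity law, since $\munit{F}:\unitprime\to F\unit$. Both required equalities then hold, and $\Id_F$ is monoidal.

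I do not anticipate any genuine obstacle: the statement is essentially a bookkeeping verification. The only step that is not a pure tautology of composition is the functoriality of the tensor $\odprime$ on identity morphisms, which is part of the ambient monoidal structure of $\D$ and is invoked exactly once, in the tensor axiom.
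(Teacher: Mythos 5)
Your proof is correct and takes the same route as the paper, which simply notes that the tensor and unit axioms for $\Id_F$ reduce to tautologies; your version spells out the one substantive ingredient (bifunctoriality of $\odprime$ on identities) that the paper leaves implicit.
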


\begin{proof}
The tensor and unit axioms for $\Id_F$ reduce to the defining properties of the monoidal structure on $F$.
\end{proof}

\section{Main Theorem: Strict 2-Category Structure}
\label{sec:main}

We now state and prove the main result.

\begin{theorem}[Strict 2-category structure]\label{thm:T1-2cat}
Let $\mathbf{DistMon}_\sigma$ denote the structure consisting of:
\begin{itemize}
  \item \emph{Objects (0-cells):} distorted monoidal categories 
  $(\C,\od,\unit,\alpha,\lambda,\rho,\Lambda,\dist)$;
  \item \emph{1-morphisms (1-cells):} $\dist$-lax monoidal functors $F:\C\to\D$;
  \item \emph{2-morphisms (2-cells):} monoidal natural transformations $\theta:F\Rightarrow G$.
\end{itemize}
Then $\mathbf{DistMon}_\sigma$ forms a \textbf{strict 2-category}, meaning:
\begin{enumerate}
  \item Composition of $\dist$-lax monoidal functors is $\dist$-lax monoidal.
  \item Vertical and horizontal compositions of monoidal natural transformations are monoidal.
  \item Identity 1-morphisms $\Id_\C$ and identity 2-morphisms $\Id_F$ are monoidal.
  \item Horizontal composition of 2-morphisms is strictly associative and unital: for all composable $\theta,\eta$ and $\phi,\psi$,
  \[
  (\psi\ast\phi)\ast(\eta\ast\theta)=(\psi\ast(\phi\ast\eta))\ast\theta
  \]
  and
  \[
  \Id_G\ast\theta=\theta=\theta\ast\Id_F.
  \]
  \item The \textbf{interchange law} holds as an equality: for composable 2-morphisms $\eta,\theta,\phi,\psi$,
  \[
  (\psi\circ\phi)\ast(\theta\circ\eta)\;=\;(\psi\ast\theta)\circ(\phi\ast\eta).
  \]
  \item \textbf{Strictness of triple composition:} For functors $F:\C\to\D$, $G:\D\to\E$, $H:\E\to\mathcal{F}$, the composite laxators and units satisfy the \emph{equalities}
  \begin{align}
  \mul{H(GF)}{X}{Y}&=\mul{(HG)F}{X}{Y},\label{eq:strict-lax-assoc}\\
  \munit{H(GF)}&=\munit{(HG)F}.\label{eq:strict-unit-assoc}
  \end{align}
\end{enumerate}
\end{theorem}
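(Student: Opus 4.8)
The plan is to obtain parts (1)--(3) directly from the auxiliary lemmas of Section~\ref{sec:auxiliary} and to concentrate the genuine work on (4)--(6). Part (1) is Lemma~\ref{lem:comp-def}; part (2) is Lemmas~\ref{lem:vertical} and~\ref{lem:horizontal}; part (3) is Lemmas~\ref{lem:id-lax} and~\ref{lem:id-monoidal}. What remains is to verify the strict equalities (4)--(6), and I expect all three to reduce to two structural facts about the ambient 2-category $\mathbf{Cat}$: functor composition is \emph{strictly} associative (so $(GF)X=G(FX)$ and $H\circ(G\circ F)=(H\circ G)\circ F$ on the nose), and each functor preserves composites (so $H(g\circ g')=H(g)\circ H(g')$).

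For part (6) I would expand both composite laxators by the defining recursion of the Notation paragraph and compare. Applying the definition twice to $H(GF)$, and using functoriality of $H$ to split the inner composite together with the identification $(GF)X=G(FX)$, gives
\[
\mul{H(GF)}{X}{Y}=H\!\big(\mul{GF}{X}{Y}\big)\circ\mul{H}{GFX}{GFY}=H\!\big(G(\mul{F}{X}{Y})\big)\circ H\!\big(\mul{G}{FX}{FY}\big)\circ\mul{H}{GFX}{GFY}.
\]
Expanding $(HG)F$ the same way, and unfolding $\mul{HG}{FX}{FY}=H(\mul{G}{FX}{FY})\circ\mul{H}{GFX}{GFY}$ and $(HG)(\mul{F}{X}{Y})=H(G(\mul{F}{X}{Y}))$, yields
\[
\mul{(HG)F}{X}{Y}=H\!\big(G(\mul{F}{X}{Y})\big)\circ H\!\big(\mul{G}{FX}{FY}\big)\circ\mul{H}{GFX}{GFY}.
\]
The two right-hand sides are literally the same string of arrows, so \eqref{eq:strict-lax-assoc} holds. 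The unit equality \eqref{eq:strict-unit-assoc} is identical in shape: both $\munit{H(GF)}$ and $\munit{(HG)F}$ unfold, using only functoriality of $H$, to $H(G(\munit{F}))\circ H(\munit{G})\circ\munit{H}$.

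For part (4) I would note that the horizontal composite is defined at the level of underlying natural transformations by $(\phi\ast\theta)_X=\phi_{F_2X}\circ G_1(\theta_X)$; iterating this formula and invoking strict associativity of functor composition shows that both parenthesizations in the displayed identity collapse to the single transformation $\psi\ast\phi\ast\eta\ast\theta$, while the unit laws $\Id_G\ast\theta=\theta=\theta\ast\Id_F$ hold componentwise. Monoidality of all these composites is supplied by Lemma~\ref{lem:horizontal}, so each side is a legitimate 2-cell of $\mathbf{DistMon}_\sigma$. For part (5) the interchange equation holds verbatim as the standard interchange law for natural transformations in $\mathbf{Cat}$, so both sides have equal components; that both sides are moreover \emph{monoidal}, hence genuine morphisms here, is exactly Lemmas~\ref{lem:vertical} and~\ref{lem:horizontal}.

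The main obstacle is not any hard calculation but the \emph{typing discipline}: at each step one must check that the objects indexing the laxators agree after the strict identifications $(GF)X=G(FX)$, that the outer functor is applied to an already well-formed composite before functoriality is invoked, and that the source and target of every arrow in the expanded strings match. Once these bookkeeping identifications are made explicit, all six equalities are forced, and no coherence isomorphism or nontrivial naturality square is needed for the strictness claims themselves—consistent with the fact that these identities reflect the strict algebraic character of the underlying 2-monad.
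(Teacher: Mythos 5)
Your proposal is correct and follows essentially the same route as the paper: parts (1)--(3) by citation of the auxiliary lemmas, part (6) by unfolding both composite laxators via functoriality of $H$ to the common normal form $H(G(\mu^F_{X,Y}))\circ H(\mu^G_{FX,FY})\circ \mu^H_{GFX,GFY}$ (and similarly for the units), and parts (4)--(5) by reduction to strict composition and the standard interchange law in $\mathbf{Cat}$, which the paper merely spells out componentwise via naturality of $\phi$ at $\theta_X$.
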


\begin{proof}
We verify each clause constructively.

\smallskip
\noindent\textbf{(1) Closure of 1-morphisms under composition.}
Let
\begin{align*}
&F:(\C,\od,\unit,\alpha,\lambda,\rho,\Lambda,\dist)\\
&\quad\to(\D,\odprime,\unitprime,\alpha',\lambda',\rho',\Lambda',\dist'),\\
&G:(\D,\odprime,\unitprime,\alpha',\lambda',\rho',\Lambda',\dist')\\
&\quad\to(\E,\oddoubleprime,\unitdoubleprime,\alpha'',\lambda'',\rho'',\Lambda'',\dist'')
\end{align*}
be $\dist$-lax. Define
\[
\mul{GF}{X}{Y}:=G\!\big(\mul{F}{X}{Y}\big)\circ \mul{G}{FX}{FY},
\]
\[
\munit{GF}:=G(\munit{F})\circ \munit{G}.
\]
By Lemmas~\ref{lem:comp-def}, \ref{lem:assoc}, \ref{lem:unit}, \ref{lem:Lambda-comp}, and \ref{lem:sigma-comp}, $(GF,\mul{GF}{-}{-},\munit{GF})$ is $\dist$-lax monoidal.

\smallskip
\noindent\textbf{(2) Closure of 2-morphisms under composition.}
Vertical composition: If $\theta:F\Rightarrow G$ and $\phi:G\Rightarrow H$ are monoidal, then $\phi\circ\theta:F\Rightarrow H$ is monoidal by Lemma~\ref{lem:vertical}.

Horizontal composition: If $\theta:F_1\Rightarrow F_2$ and $\phi:G_1\Rightarrow G_2$ are monoidal, define
\[
(\phi\ast\theta)_X:=\phi_{F_2X}\circ G_1(\theta_X)=G_2(\theta_X)\circ \phi_{F_1X}.
\]
Then $\phi\ast\theta:G_1F_1\Rightarrow G_2F_2$ is monoidal by Lemma~\ref{lem:horizontal}.

\smallskip
\noindent\textbf{(3) Identity 1- and 2-morphisms.}
For each $\C$, equip $\Id_\C$ with $\mul{\Id}{X}{Y}:=\Id_{X\od Y}$ and $\munit{\Id}:=\Id_\unit$. Then $\Id_\C$ is $\dist$-lax by Lemma~\ref{lem:id-lax}. For any $F$, the identity $\Id_F$ is monoidal by Lemma~\ref{lem:id-monoidal}.

\smallskip
\noindent\textbf{(4) Associativity and unitality of horizontal composition.}
Horizontal composition is defined componentwise via functor composition and whiskering. Since functor composition in $\mathbf{Cat}$ is strictly associative and unital, horizontal composition inherits this property. Explicitly, for $\theta:F_0\Rightarrow F_1$ and $\phi:G_0\Rightarrow G_1$,
\[
(\Id_G\ast\theta)_X=\Id_{GF_1X}\circ G(\theta_X)=G(\theta_X)
\]
and
\[
(\phi\ast\Id_F)_X=\phi_{FX}\circ \Id_{G_0FX}=\phi_{FX}.
\]
The associativity of horizontal composition follows from the associativity of functor composition.

\smallskip
\noindent\textbf{(5) Interchange law.}
Take
\[
\eta:F_0\Rightarrow F_1,\quad \theta:F_1\Rightarrow F_2,\qquad
\phi:G_0\Rightarrow G_1,\quad \psi:G_1\Rightarrow G_2.
\]
For each $X\in\C$, compute:
\begin{align*}
&\big((\psi\circ\phi)\ast(\theta\circ\eta)\big)_X\\
&=(\psi\circ\phi)_{F_2X}\circ G_0\big((\theta\circ\eta)_X\big)\\
&=\big(\psi_{F_2X}\circ \phi_{F_2X}\big)\circ \big(G_0(\theta_X)\circ G_0(\eta_X)\big)\\
&=\psi_{F_2X}\circ \big(\phi_{F_2X}\circ G_0(\theta_X)\big)\circ G_0(\eta_X).
\end{align*}
By naturality of $\phi$ at the morphism $\theta_X:F_1X\to F_2X$ (Lemma~\ref{lem:nat-phi}),
\[
\phi_{F_2X}\circ G_0(\theta_X)=G_1(\theta_X)\circ \phi_{F_1X}.
\]
Thus
\begin{align*}
&\big((\psi\circ\phi)\ast(\theta\circ\eta)\big)_X\\
&=\psi_{F_2X}\circ G_1(\theta_X)\circ \phi_{F_1X}\circ G_0(\eta_X)\\
&=\big(\psi_{F_2X}\circ G_1(\theta_X)\big)\circ\big(\phi_{F_1X}\circ G_0(\eta_X)\big)\\
&=(\psi\ast\theta)_X\circ (\phi\ast\eta)_X\\
&=\big((\psi\ast\theta)\circ(\phi\ast\eta)\big)_X.
\end{align*}
Hence the interchange law holds as an equality componentwise, and thus as an equality of natural transformations.

\smallskip
\noindent\textbf{(6) Strictness of triple composition.}
Let $F:\C\to\D$, $G:\D\to\E$, $H:\E\to\mathcal{F}$ be $\dist$-lax monoidal functors. We verify \eqref{eq:strict-lax-assoc} and \eqref{eq:strict-unit-assoc}.

For the laxators, compute for any $X,Y\in\C$:
\begin{align*}
\mul{H(GF)}{X}{Y}
&=H\!\big(\mul{GF}{X}{Y}\big)\circ \mul{H}{(GF)X}{(GF)Y}\\
&=H\!\big(G(\mul{F}{X}{Y})\circ \mul{G}{FX}{FY}\big)\circ \mul{H}{GFX}{GFY}\\
&=H\!\big(G(\mul{F}{X}{Y})\big)\circ H\!\big(\mul{G}{FX}{FY}\big)\\
&\quad\circ \mul{H}{GFX}{GFY}\\
&=(HG)\!\big(\mul{F}{X}{Y}\big)\\
&\quad\circ \underbrace{\Big(H\!\big(\mul{G}{FX}{FY}\big)\circ \mul{H}{GFX}{GFY}\Big)}_{=\,\mul{HG}{FX}{FY}}\\
&=\mul{(HG)F}{X}{Y}.
\end{align*}
For the units:
\begin{align*}
\munit{H(GF)}
&=H\!\big(\munit{GF}\big)\circ \munit{H}\\
&=H\!\big(G(\munit{F})\circ \munit{G}\big)\circ \munit{H}\\
&=H\!\big(G(\munit{F})\big)\circ H(\munit{G})\circ \munit{H}\\
&=(HG)(\munit{F})\circ \underbrace{\big(H(\munit{G})\circ \munit{H}\big)}_{=\,\munit{HG}}\\
&=\munit{(HG)F}.
\end{align*}

Thus both \eqref{eq:strict-lax-assoc} and \eqref{eq:strict-unit-assoc} hold as \emph{equalities} of morphisms, establishing the strict associativity of functor composition in $\mathbf{DistMon}_\sigma$.

\smallskip
From clauses (1)–(6), $\mathbf{DistMon}_\sigma$ is a strict 2-category.
\end{proof}

\begin{remark}
The strictness established in Theorem~\ref{thm:T1-2cat} is a consequence of our explicit choices of composite laxator structures. In a more general setting allowing lax or pseudo $\Lambda$-compatibility, one would obtain a weak 2-category (bicategory) structure with coherence isomorphisms. Our framework prioritizes simplicity and computability at the cost of excluding certain natural functors (see Remark~\ref{rem:scope-SLambda}).
\end{remark}

\section{Examples}
\label{sec:examples}

We present rigorously verified examples. Each example explicitly checks the axioms (D1)–(D4) in Definition~\ref{def:distmoncat}. Throughout, $\beta$ denotes a braiding (when present), $\alpha,\lambda,\rho$ the associator and unitors, and $\sigma$ the binary distortion.

\begin{example}[Braided monoidal categories]\label{ex:braided}
Let $(\C,\otimes,\mathbf{I},\alpha,\lambda,\rho,\beta)$ be braided. Set $\Lambda:=\Id_{\Id_\C}$ and $\sigma:=\beta$. Then (D1) and (D3) hold by naturality and the braided hexagons; (D2) holds because $\beta_{X,\mathbf{I}}=\lambda_X^{-1}\circ\rho_X$ and $\beta_{\mathbf{I},X}=\rho_X^{-1}\circ\lambda_X$; (D4) is immediate from $\Lambda=\Id$. Thus $(\C,\otimes,\mathbf{I},\alpha,\lambda,\rho,\Lambda,\sigma)$ is a distorted monoidal category. This embeds the braided case as the invertible-distortion subcase.
\end{example}

\begin{proposition}[Twisting a braiding by a $\beta$–equivariant typed idempotent 2–cocycle]\label{prop:twist-by-e}
Let $(\C,\otimes,\mathbf{I},\alpha,\lambda,\rho,\beta)$ be a braided monoidal category and fix the direction
\[
\alpha_{X,Y,Z}:(X\otimes Y)\otimes Z\longrightarrow X\otimes(Y\otimes Z).
\]
Suppose we are given a binatural family
\[
e_{X,Y}:X\otimes Y\longrightarrow X\otimes Y
\]
such that for all $X,Y,Z$:
\begin{enumerate}
\item[\textnormal{(E0)}] \emph{Idempotent and binatural:} $e_{X,Y}\circ e_{X,Y}=e_{X,Y}$ and $(f\otimes g)\circ e_{X,Y}=e_{X',Y'}\circ(f\otimes g)$ for all $f:X\!\to\!X'$ and $g:Y\!\to\!Y'$.
\item[\textnormal{(E1)}] \emph{Normalization:} $e_{X,\mathbf{I}}=\Id_{X\otimes\mathbf{I}}$ and $e_{\mathbf{I},X}=\Id_{\mathbf{I}\otimes X}$.
\item[\textnormal{(E2$^\alpha_{\mathrm{L}}$)}] \emph{Typed multiplicativity (left form):}
\[
\alpha_{X,Y,Z}\circ e_{X\otimes Y,\,Z}
\;=\;
(\Id_X\otimes e_{Y,Z})\circ e_{X,\,Y\otimes Z}\circ \alpha_{X,Y,Z},
\quad\text{in }\C\!\big((X\otimes Y)\otimes Z,\ X\otimes(Y\otimes Z)\big).
\]
\item[\textnormal{(E2$^\alpha_{\mathrm{R}}$)}] \emph{Typed multiplicativity (right form):}
\[
\alpha^{-1}_{X,Y,Z}\circ e_{X,\,Y\otimes Z}
\;=\;
(e_{X,Y}\otimes \Id_Z)\circ e_{X\otimes Y,\,Z}\circ \alpha^{-1}_{X,Y,Z},
\quad\text{in }\C\!\big(X\otimes(Y\otimes Z),\ (X\otimes Y)\otimes Z\big).
\]
\item[\textnormal{(B$^\alpha_{\mathrm{L}}$)}] \emph{$\beta$–equivariant left sliding (typed):}
\[
\alpha^{-1}_{X,Z,Y}\circ(\Id_X\otimes \beta_{Y,Z})\circ(\Id_X\otimes e_{Y,Z})\circ e_{X,\,Y\otimes Z}
\;=\;
(e_{X,Z}\otimes \Id_Y)\circ \alpha^{-1}_{X,Z,Y}\circ(\Id_X\otimes \beta_{Y,Z})\circ(\Id_X\otimes e_{Y,Z}),
\]
as morphisms $X\otimes(Y\otimes Z)\to (X\otimes Z)\otimes Y$.
\item[\textnormal{(B$^\alpha_{\mathrm{R}}$)}] \emph{$\beta$–equivariant right sliding (typed):}
\[
\alpha_{Y,X,Z}\circ(\beta_{X,Y}\otimes \Id_Z)\circ(e_{X,Y}\otimes \Id_Z)\circ e_{X\otimes Y,\,Z}
\;=\;
(\Id_Y\otimes e_{X,Z})\circ \alpha_{Y,X,Z}\circ(\beta_{X,Y}\otimes \Id_Z)\circ(e_{X,Y}\otimes \Id_Z),
\]
as morphisms $(X\otimes Y)\otimes Z\to Y\otimes(X\otimes Z)$.
\end{enumerate}
For later reference we write \textnormal{(E2)} for the pair \textnormal{(E2$^\alpha_{\mathrm{L}}$)} and \textnormal{(E2$^\alpha_{\mathrm{R}}$)}.
Define $\Lambda:=\Id_{\Id_\C}$ and
\[
\sigma_{X,Y}:=\beta_{X,Y}\circ e_{X,Y}:X\otimes Y\longrightarrow Y\otimes X.
\]
Then $(\C,\otimes,\mathbf{I},\alpha,\lambda,\rho,\Lambda,\sigma)$ satisfies \textnormal{(D1)}–\textnormal{(D4)} of Definition~\textnormal{\ref{def:distmoncat}}. Moreover, $\sigma_{X,Y}$ is invertible for all $X,Y$ if and only if $e_{X,Y}$ is invertible (hence, by idempotency, $e_{X,Y}=\Id_{X\otimes Y}$).
\end{proposition}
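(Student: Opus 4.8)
The plan is to verify (D1)--(D4) in turn, deferring the two hexagons of (D3) to the end since they carry all the real content. Because $\Lambda=\Id_{\Id_\C}$, axiom (D4) and the clause $\Lambda_{\mathbf{I}}=\Id_{\mathbf{I}}$ of (D2) are immediate. For (D1), I would expand $\sigma_{X,Y}=\beta_{X,Y}\circ e_{X,Y}$, transport a pair $(g\otimes f)$ across $\beta$ by naturality of the braiding and then across $e$ by the binaturality clause of (E0); the two rewrites recombine to $\sigma_{X',Y'}\circ(f\otimes g)$. For the $\sigma$-clauses of (D2), the normalization (E1) gives $e_{X,\mathbf{I}}=\Id$ and $e_{\mathbf{I},X}=\Id$, so $\sigma_{X,\mathbf{I}}=\beta_{X,\mathbf{I}}$ and $\sigma_{\mathbf{I},X}=\beta_{\mathbf{I},X}$, whence I invoke the standard braided-category identities $\beta_{X,\mathbf{I}}=\lambda_X^{-1}\circ\rho_X$ and $\beta_{\mathbf{I},X}=\rho_X^{-1}\circ\lambda_X$ already used in Example~\ref{ex:braided}.

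For the first hexagon I would start from $\sigma_{X\otimes Y,Z}=\beta_{X\otimes Y,Z}\circ e_{X\otimes Y,Z}$ and substitute the braided hexagon for $\beta_{X\otimes Y,Z}$. The resulting expression ends in $\alpha_{X,Y,Z}\circ e_{X\otimes Y,Z}$, which I rewrite by (E2$^\alpha_{\mathrm{L}}$) as $(\Id_X\otimes e_{Y,Z})\circ e_{X,Y\otimes Z}\circ\alpha_{X,Y,Z}$. After this substitution the middle sub-composite $\alpha^{-1}_{X,Z,Y}\circ(\Id_X\otimes\beta_{Y,Z})\circ(\Id_X\otimes e_{Y,Z})\circ e_{X,Y\otimes Z}$ is exactly the left-hand side of (B$^\alpha_{\mathrm{L}}$); applying that identity slides an $(e_{X,Z}\otimes\Id_Y)$ to the outside. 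Finally, recombining $(\beta_{X,Z}\otimes\Id_Y)\circ(e_{X,Z}\otimes\Id_Y)=(\sigma_{X,Z}\otimes\Id_Y)$ and $(\Id_X\otimes\beta_{Y,Z})\circ(\Id_X\otimes e_{Y,Z})=(\Id_X\otimes\sigma_{Y,Z})$ by functoriality of $\otimes$ yields precisely the claimed right-hand side. The second hexagon is the mirror image: start from $\sigma_{X,Y\otimes Z}=\beta_{X,Y\otimes Z}\circ e_{X,Y\otimes Z}$, substitute the other braided hexagon, push $e_{X,Y\otimes Z}$ through $\alpha^{-1}_{X,Y,Z}$ by (E2$^\alpha_{\mathrm{R}}$), slide by (B$^\alpha_{\mathrm{R}}$), and reassemble the $\sigma$-factors $(\Id_Y\otimes\sigma_{X,Z})$ and $(\sigma_{X,Y}\otimes\Id_Z)$.

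For the invertibility dichotomy: because each $\beta_{X,Y}$ is an isomorphism, $\sigma_{X,Y}=\beta_{X,Y}\circ e_{X,Y}$ is invertible if and only if $e_{X,Y}=\beta_{X,Y}^{-1}\circ\sigma_{X,Y}$ is, and an invertible idempotent is forced to equal the identity by cancelling $e_{X,Y}$ in the relation $e_{X,Y}\circ e_{X,Y}=e_{X,Y}$ from the idempotency clause of (E0).

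I expect the hexagons to be the only genuine obstacle, and even there the difficulty is bookkeeping rather than ideas: the hypotheses (E2) and (B) have been engineered so that the braided hexagon, the associator-transport of $e$, and the $\beta$-equivariant slide chain together exactly. The delicate points are (i) keeping the object-labels on every associator instance correct so that (E2$^\alpha_{\mathrm{L/R}}$) and (B$^\alpha_{\mathrm{L/R}}$) apply at the intended objects, and (ii) confirming that no invertibility of $e$ is used anywhere in (D1)--(D3), so that the construction genuinely accommodates the non-invertible, lossy distortions that motivate the proposition.
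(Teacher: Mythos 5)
Your proposal is correct and follows essentially the same route as the paper's proof: verify (D1), (D2), (D4) directly from (E0), (E1), and $\Lambda=\Id$; establish each hexagon by substituting the corresponding braided hexagon, applying (E2$^\alpha_{\mathrm{L/R}}$) to transport $e$ across the associator, invoking (B$^\alpha_{\mathrm{L/R}}$) to slide the idempotent, and recombining $\beta\circ e=\sigma$ by functoriality of $\otimes$; and settle invertibility via $e_{X,Y}=\beta_{X,Y}^{-1}\circ\sigma_{X,Y}$ plus idempotency. No discrepancies to report.
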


\begin{proof}
We verify (D1)–(D4) and then the invertibility claim. We use the braided hexagons, typed in the chosen direction:
\begin{align}
\beta_{X\otimes Y,\,Z}
&=\alpha_{Z,X,Y}\circ(\beta_{X,Z}\otimes \Id_Y)\circ \alpha^{-1}_{X,Z,Y}\circ(\Id_X\otimes \beta_{Y,Z})\circ \alpha_{X,Y,Z},\label{eq:hex-L}\\
\beta_{X,\,Y\otimes Z}
&=\alpha^{-1}_{Y,Z,X}\circ(\Id_Y\otimes \beta_{X,Z})\circ \alpha_{Y,X,Z}\circ(\beta_{X,Y}\otimes \Id_Z)\circ \alpha^{-1}_{X,Y,Z}.\label{eq:hex-R}
\end{align}

\smallskip
\textbf{(D1) Binaturality.}
For $f:X\!\to\!X'$ and $g:Y\!\to\!Y'$,
\[
(g\otimes f)\circ \sigma_{X,Y}
=(g\otimes f)\circ \beta_{X,Y}\circ e_{X,Y}
=\beta_{X',Y'}\circ(f\otimes g)\circ e_{X,Y}
=\beta_{X',Y'}\circ e_{X',Y'}\circ(f\otimes g)
=\sigma_{X',Y'}\circ(f\otimes g),
\]
using naturality of $\beta$ and binaturality in \textnormal{(E0)}.

\smallskip
\textbf{(D2) Unit normalization.}
By \textnormal{(E1)} and the unit laws for $\beta$,
\[
\sigma_{X,\mathbf{I}}=\beta_{X,\mathbf{I}}\circ e_{X,\mathbf{I}}=\lambda_X^{-1}\circ \rho_X,\qquad
\sigma_{\mathbf{I},X}=\beta_{\mathbf{I},X}\circ e_{\mathbf{I},X}=\rho_X^{-1}\circ \lambda_X,
\]
and $\Lambda_{\mathbf I}=\Id_{\mathbf I}$.

\smallskip
\textbf{(D3) Hexagons for $\sigma$.}

\emph{First hexagon.} As morphisms $(X\otimes Y)\otimes Z\to Z\otimes (X\otimes Y)$ we need
\[
\sigma_{X\otimes Y,\,Z}
=\alpha_{Z,X,Y}\circ(\sigma_{X,Z}\otimes \Id_Y)\circ \alpha^{-1}_{X,Z,Y}\circ(\Id_X\otimes \sigma_{Y,Z})\circ \alpha_{X,Y,Z}.
\]
Using \eqref{eq:hex-L} and then \textnormal{(E2$^\alpha_{\mathrm{L}}$)},
\begin{align*}
\sigma_{X\otimes Y,\,Z}
&=\beta_{X\otimes Y,\,Z}\circ e_{X\otimes Y,\,Z}\\
&=\alpha_{Z,X,Y}\circ(\beta_{X,Z}\otimes \Id_Y)\circ \alpha^{-1}_{X,Z,Y}\circ(\Id_X\otimes \beta_{Y,Z})\circ \alpha_{X,Y,Z}\circ e_{X\otimes Y,\,Z}\\
&=\alpha_{Z,X,Y}\circ(\beta_{X,Z}\otimes \Id_Y)\circ
\Big[\alpha^{-1}_{X,Z,Y}\circ(\Id_X\otimes \beta_{Y,Z})\circ(\Id_X\otimes e_{Y,Z})\circ e_{X,\,Y\otimes Z}\Big]\circ \alpha_{X,Y,Z}.
\end{align*}
Apply \textnormal{(B$^\alpha_{\mathrm{L}}$)} to the bracketed block to obtain
\[
\alpha^{-1}_{X,Z,Y}\circ(\Id_X\otimes \beta_{Y,Z})\circ(\Id_X\otimes e_{Y,Z})\circ e_{X,\,Y\otimes Z}
=(e_{X,Z}\otimes \Id_Y)\circ \alpha^{-1}_{X,Z,Y}\circ(\Id_X\otimes \beta_{Y,Z})\circ(\Id_X\otimes e_{Y,Z}),
\]
and hence
\begin{align*}
\sigma_{X\otimes Y,\,Z}
&=\alpha_{Z,X,Y}\circ(\beta_{X,Z}\otimes \Id_Y)\circ(e_{X,Z}\otimes \Id_Y)\circ \alpha^{-1}_{X,Z,Y}\circ(\Id_X\otimes \beta_{Y,Z})\circ(\Id_X\otimes e_{Y,Z})\circ \alpha_{X,Y,Z}\\
&=\alpha_{Z,X,Y}\circ\big((\beta_{X,Z}\circ e_{X,Z})\otimes \Id_Y\big)\circ \alpha^{-1}_{X,Z,Y}\circ\big(\Id_X\otimes (\beta_{Y,Z}\circ e_{Y,Z})\big)\circ \alpha_{X,Y,Z}\\
&=\alpha_{Z,X,Y}\circ(\sigma_{X,Z}\otimes \Id_Y)\circ \alpha^{-1}_{X,Z,Y}\circ(\Id_X\otimes \sigma_{Y,Z})\circ \alpha_{X,Y,Z}.
\end{align*}

\emph{Second hexagon.} As morphisms $X\otimes (Y\otimes Z)\to (Y\otimes Z)\otimes X$ we need
\[
\sigma_{X,\,Y\otimes Z}
=\alpha^{-1}_{Y,Z,X}\circ(\Id_Y\otimes \sigma_{X,Z})\circ \alpha_{Y,X,Z}\circ (\sigma_{X,Y}\otimes \Id_Z)\circ \alpha^{-1}_{X,Y,Z}.
\]
Using \eqref{eq:hex-R} and then \textnormal{(E2$^\alpha_{\mathrm{R}}$)},
\begin{align*}
\sigma_{X,\,Y\otimes Z}
&=\beta_{X,\,Y\otimes Z}\circ e_{X,\,Y\otimes Z}\\
&=\alpha^{-1}_{Y,Z,X}\circ(\Id_Y\otimes \beta_{X,Z})\circ \alpha_{Y,X,Z}\circ (\beta_{X,Y}\otimes \Id_Z)\circ \alpha^{-1}_{X,Y,Z}\circ e_{X,\,Y\otimes Z}\\
&=\alpha^{-1}_{Y,Z,X}\circ(\Id_Y\otimes \beta_{X,Z})\circ \alpha_{Y,X,Z}\circ
\Big[(\beta_{X,Y}\otimes \Id_Z)\circ (e_{X,Y}\otimes \Id_Z)\circ e_{X\otimes Y,\,Z}\Big]\circ \alpha^{-1}_{X,Y,Z}.
\end{align*}
Apply \textnormal{(B$^\alpha_{\mathrm{R}}$)} to the bracketed block to obtain
\[
\alpha_{Y,X,Z}\circ(\beta_{X,Y}\otimes \Id_Z)\circ(e_{X,Y}\otimes \Id_Z)\circ e_{X\otimes Y,\,Z}
=(\Id_Y\otimes e_{X,Z})\circ \alpha_{Y,X,Z}\circ(\beta_{X,Y}\otimes \Id_Z)\circ(e_{X,Y}\otimes \Id_Z),
\]
and hence
\begin{align*}
\sigma_{X,\,Y\otimes Z}
&=\alpha^{-1}_{Y,Z,X}\circ(\Id_Y\otimes \beta_{X,Z})\circ(\Id_Y\otimes e_{X,Z})\circ \alpha_{Y,X,Z}\circ \big((\beta_{X,Y}\circ e_{X,Y})\otimes \Id_Z\big)\circ \alpha^{-1}_{X,Y,Z}\\
&=\alpha^{-1}_{Y,Z,X}\circ\big(\Id_Y\otimes (\beta_{X,Z}\circ e_{X,Z})\big)\circ \alpha_{Y,X,Z}\circ (\sigma_{X,Y}\otimes \Id_Z)\circ \alpha^{-1}_{X,Y,Z}\\
&=\alpha^{-1}_{Y,Z,X}\circ(\Id_Y\otimes \sigma_{X,Z})\circ \alpha_{Y,X,Z}\circ (\sigma_{X,Y}\otimes \Id_Z)\circ \alpha^{-1}_{X,Y,Z}.
\end{align*}

\smallskip
\textbf{(D4) Monoidality of $\Lambda$.}
By definition $\Lambda=\Id_{\Id_\C}$, so $\Lambda_{X\otimes Y}=\Lambda_X\otimes \Lambda_Y$ and $\Lambda_{\mathbf I}=\Id_{\mathbf I}$.

\smallskip
\textbf{Invertibility.}
If every $e_{X,Y}$ is invertible then $\sigma_{X,Y}=\beta_{X,Y}\circ e_{X,Y}$ is invertible. Conversely, if $\sigma_{X,Y}$ is invertible then $e_{X,Y}=\beta^{-1}_{X,Y}\circ \sigma_{X,Y}$ is invertible; idempotency forces $e_{X,Y}=\Id_{X\otimes Y}$.
\end{proof}

\begin{example}[Parity projector that violates $\beta$–equivariant sliding]\label{ex:super-e}
Let $\C=\mathbf{SVect}_k$ be the symmetric monoidal category of $\mathbb{Z}/2$–graded $k$–vector spaces with the Koszul braiding $\beta$. 
For $V,W\in\C$, write $V=V_0\oplus V_1$ and $W=W_0\oplus W_1$. 
Define a binatural idempotent
\[
e_{V,W}:V\otimes W\longrightarrow V\otimes W
\]
to be the projector acting as the identity on the three parity summands
\[
V_0\otimes W_0,\quad V_0\otimes W_1,\quad V_1\otimes W_0,
\]
and zero on $V_1\otimes W_1$ (equivalently, $e_{V,W}=\Id_{V\otimes W}-\pi_{V_1\otimes W_1}$).

\medskip
\noindent\textbf{What holds.}
\begin{enumerate}
\item \emph{(E0) Idempotent and binatural.} 
By construction $e_{V,W}^2=e_{V,W}$; binaturality follows since graded maps preserve the parity decomposition and commute with the canonical projectors.

\item \emph{(E1) Normalization.} 
The unit $\mathbf I=k$ is purely even, so $e_{V,\mathbf I}=e_{\mathbf I,V}=\Id$.

\item \emph{(E2) Typed multiplicativity.} 
For homogeneous $v\in V_a$, $w\in W_b$, $z\in Z_c$ with $a,b,c\in\{0,1\}$, a direct check shows
\[
e_{V\otimes W,\,Z}\circ(e_{V,W}\otimes \Id)=0
\ \Longleftrightarrow\ 
(a\wedge b)\ \vee\ ((a\oplus b)\wedge c),
\]
and
\[
(\Id\otimes e_{W,Z})\circ e_{V,\,W\otimes Z}=0
\ \Longleftrightarrow\ 
(b\wedge c)\ \vee\ (a\wedge (b\oplus c)).
\]
Both conditions are equivalent to “at least two of $a,b,c$ equal $1$,” hence (E2$^\alpha_{\mathrm L}$) and (E2$^\alpha_{\mathrm R}$) hold after inserting the associators $\alpha,\alpha^{-1}$.
\end{enumerate}

\medskip
\noindent\textbf{What fails.} 
The $\beta$–equivariant sliding axioms \textnormal{(B$^\alpha_{\mathrm L}$)} and \textnormal{(B$^\alpha_{\mathrm R}$)} \emph{do not} hold for this $e$.
Working in a strict monoidal skeleton (Mac\,Lane coherence; then reinsert $\alpha^{\pm1}$), consider
\[
T:=v_1\otimes w_1\otimes z_0\in V_1\otimes W_1\otimes Z_0\quad (v_1\in V_1,\ w_1\in W_1,\ z_0\in Z_0\ \text{nonzero}).
\]
For \textnormal{(B$^\alpha_{\mathrm L}$)} in strict form,
\[
(\Id_V\otimes \beta_{W,Z})\circ(\Id_V\otimes e_{W,Z})\circ e_{V,\,W\otimes Z}
\stackrel{?}{=}
(e_{V,Z}\otimes \Id_W)\circ(\Id_V\otimes \beta_{W,Z})\circ(\Id_V\otimes e_{W,Z}),
\]
we obtain:
\begin{itemize}
\item Left-hand side: $e_{V,\,W\otimes Z}(T)=0$ since $|V|=1$ and $|W\otimes Z|=1\oplus 0=1$; hence $\mathrm{LHS}(T)=0$.
\item Right-hand side: $(\Id_V\otimes e_{W,Z})(T)=T$ because $|W|=1$, $|Z|=0$; then $(\Id_V\otimes \beta_{W,Z})(T)=v_1\otimes z_0\otimes w_1$; finally $(e_{V,Z}\otimes \Id_W)$ acts as the identity on $v_1\otimes z_0$ (parity $(1,0)$), so $\mathrm{RHS}(T)=v_1\otimes z_0\otimes w_1\neq 0$.
\end{itemize}
Thus \textnormal{(B$^\alpha_{\mathrm L}$)} fails; a similar parity check yields a failure of \textnormal{(B$^\alpha_{\mathrm R}$)} as well.

\medskip
\noindent\textbf{Consequence.}
Although \textnormal{(E0)}, \textnormal{(E1)}, and \textnormal{(E2$^\alpha_{\mathrm{L/R}}$)} hold, the projector $e$ above does \emph{not} satisfy the $\beta$–equivariant sliding axioms \textnormal{(B$^\alpha_{\mathrm{L/R}}$)}. 
Therefore the twist $\sigma:=\beta\circ e$ does not meet the hypotheses of Proposition~\ref{prop:twist-by-e}, and we cannot conclude from that proposition that $(\C,\otimes,\mathbf I,\alpha,\lambda,\rho,\Lambda:=\Id,\sigma)$ is a distorted monoidal category. 
This example shows that the sliding axioms are genuine additional constraints, not consequences of \textnormal{(E0)}–\textnormal{(E2)}.
\end{example}

\begin{example}[Nontrivial unit distortion on $\mathbb{N}$–graded vector spaces]\label{ex:N-graded-Lambda}
Let $\C=\mathbf{Vect}_k^{\mathbb{N}}$ be $\mathbb{N}$–graded vector spaces with degree–preserving linear maps and the usual tensor $(V\otimes W)_n:=\bigoplus_{i+j=n}V_i\otimes W_j$. For a fixed scalar $t\in k$, define a natural transformation $\Lambda^{(t)}:\Id_\C\Rightarrow \Id_\C$ by
\[
\Lambda^{(t)}_V\big|_{V_n}:=t^n\cdot \Id_{V_n}\qquad(n\in\mathbb{N}).
\]
Then $\Lambda^{(t)}_{\mathbf{I}}=\Id$ and, for all $V,W$,
\[
\Lambda^{(t)}_{V\otimes W}\big|_{(V\otimes W)_n}
=t^n\cdot \Id
=\bigoplus_{i+j=n}t^{i}\Id_{V_i}\otimes t^{j}\Id_{W_j}
=(\Lambda^{(t)}_V\otimes \Lambda^{(t)}_W)\big|_{(V\otimes W)_n},
\]
so (D4) holds. Choosing any distortion $\sigma$ that satisfies (D1)–(D3) (e.g. the symmetric braiding) yields a distorted monoidal category with a \emph{nontrivial} unit distortion whenever $t\neq 1$. 
Note that the projector $e$ from Example~\ref{ex:super-e} fails the sliding conditions (B$^\alpha_{\mathrm{L}}$) and (B$^\alpha_{\mathrm{R}}$), so $\sigma=\beta\circ e$ generally does \emph{not} satisfy (D3). 
If $t=0$, $\Lambda^{(0)}$ is highly non-invertible yet still monoidal; if $t=-1$ in characteristic $\neq 2$, $\Lambda^{(-1)}$ is the parity sign on total degree. This shows that (D4) admits genuinely nontrivial solutions even when $\sigma$ is fixed independently.
\end{example}

\begin{example}[Why (D4) is necessary]\label{ex:D4-counter}
Retain $\C=\mathbf{Vect}_k^{\mathbb{N}}$ and define a natural transformation $\widetilde{\Lambda}$ by scalars $a_n\in k$ on degree $n$:
\[
\widetilde{\Lambda}_V\big|_{V_n}:=a_n\cdot \Id_{V_n},\qquad a_0=1.
\]
Assume $a_1=1$ but $a_2=0$. Then $\widetilde{\Lambda}$ is natural and $\widetilde{\Lambda}_{\mathbf{I}}=\Id$, but (D4) fails: for $V=W=k[1]$ concentrated in degree $1$,
\[
(\widetilde{\Lambda}_{V}\otimes \widetilde{\Lambda}_{W})\big|_{(V\otimes W)_2}
=a_1a_1\,\Id=\Id,\qquad
\widetilde{\Lambda}_{V\otimes W}\big|_{(V\otimes W)_2}
=a_2\,\Id=0,
\]
so $\widetilde{\Lambda}_{V\otimes W}\neq \widetilde{\Lambda}_V\otimes \widetilde{\Lambda}_W$. Hence the monoidality of $\Lambda$ in (D4) is a real constraint, not a consequence of the other axioms.
\end{example}

\begin{remark}[Scope and limitations]
Examples~\ref{ex:super-e}–\ref{ex:N-graded-Lambda} separate the roles of $\sigma$ and $\Lambda$. The non-invertible distortion in Example~\ref{ex:super-e} satisfies (D2) exactly because the normalizing idempotents obey $e_{X,\mathbf{I}}=e_{\mathbf{I},X}=\Id$. Example~\ref{ex:N-graded-Lambda} supplies nontrivial $\Lambda$ satisfying (D4) in a setting where $\mathrm{Nat}(\Id,\Id)$ is large; by contrast, in categories such as $\mathbf{Set}$ or $\mathbf{Vect}_k$ one has only the trivial (scalar) endomorphisms of $\Id$, whence $\Lambda=\Id$ necessarily. This explains why “forcing” nontrivial $\Lambda$ in those categories fails, and it corrects the flawed set-based counterexample in the original draft.
\end{remark}

\section{Discussion and Outlook}
\label{sec:discussion}

\subsection{A 2-monad presentation of distorted monoidal categories}
\label{sec:2monad-presentation}

We make precise the 2\mbox{-}monadic slogan mentioned in the Introduction. 
Let $\mathsf S$ denote the 2\mbox{-}monad on $\mathbf{Cat}$ whose \emph{strict} algebras are strict monoidal categories and whose \emph{pseudo}algebras are (weak) monoidal categories \cite[§3–§4]{Lack2010}, cf.\ \cite{Benabou1967}. 
Throughout the paper we work at the level of $\mathsf S$–\emph{pseudo}algebras.

\paragraph{Adjoining the distortion operations.}
Form the algebraic extension $\mathsf T_\sigma$ of $\mathsf S$ by freely adding:
\begin{itemize}
\item a binatural family of 1\mbox{-}cells $\sigma_{X,Y}:X\otimes Y\to Y\otimes X$,
\item a natural family of 1\mbox{-}cells $\Lambda_X:X\to X$,
\end{itemize}
together with equational 2\mbox{-}cells imposing exactly the axioms \textnormal{(D1)}–\textnormal{(D4)} and the usual monoidal axioms. 
Since these are algebraic equations between pasting composites, the resulting algebraic 2\mbox{-}monad $\mathsf T_\sigma$ exists and is finitary by standard 2\mbox{-}monad machinery \cite{Lack2010}.

\begin{proposition}\label{prop:Tsigmaalgebras}
There is a biequivalence between:
\begin{enumerate}
\item distorted monoidal categories in the sense of Definition~\ref{def:distmoncat}, and
\item $\mathsf T_\sigma$–pseudoalgebras.
\end{enumerate}
Under this correspondence, $\sigma$–lax monoidal functors satisfying \textnormal{(S$\Lambda$)} and \textnormal{(S$\sigma$)} are precisely \emph{lax} $\mathsf T_\sigma$–morphisms, and monoidal natural transformations are $\mathsf T_\sigma$–transformations.
\end{proposition}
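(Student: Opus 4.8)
The plan is to avoid constructing the biequivalence by hand and instead invoke the general theory of presentations of algebraic $2$-monads \cite{Lack2010}, specializing the standard unpacking of pseudoalgebras to the presentation of $\mathsf T_\sigma$. The starting point is the base dictionary for $\mathsf S$: an $\mathsf S$-pseudoalgebra structure on a category $\C$ is exactly a weak monoidal structure $(\od,\unit,\alpha,\lambda,\rho)$, lax $\mathsf S$-morphisms are lax monoidal functors $(\mul{F}{-}{-},\munit{F})$, and $\mathsf S$-transformations are monoidal natural transformations. First I would record this equivalence precisely, then present $\mathsf T_\sigma$ as the algebraic extension of $\mathsf S$ by the binary generator $\dist_{X,Y}\colon X\od Y\to Y\od X$ and the unary generator $\Lambda_X\colon X\to X$ on the identity, together with the equational $2$-cells encoding \textnormal{(D1)}--\textnormal{(D4)}. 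Since all generators and relations are pasting composites of the monoidal operations, $\mathsf T_\sigma$ is finitary and algebraic, and the general unpacking of a presented $2$-monad decomposes a $\mathsf T_\sigma$-pseudoalgebra into an underlying $\mathsf S$-pseudoalgebra equipped with a chosen interpretation of each generator subject to the imposed relations.

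Granting this decomposition, the object-level correspondence (clause (1)) is pure data-matching. The interpretation of the binary generator yields a family $\dist_{X,Y}$, whose naturality as a generating operation is binaturality \textnormal{(D1)}; the adjoined relations supply the unit normalizations and $\Lambda_\unit=\Id_\unit$ of \textnormal{(D2)}, the two typed hexagons of \textnormal{(D3)}, and the multiplicativity of \textnormal{(D4)}. The essential observation is that although the underlying monoidal structure is only \emph{pseudo}---so that $\alpha,\lambda,\rho$ are honest isomorphisms rather than identities---the relations \textnormal{(D1)}--\textnormal{(D4)} are imposed as strict equalities of morphisms in the presentation, and therefore hold on the nose in every pseudoalgebra. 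I would package the two assignments into a comparison pseudofunctor and check essential surjectivity on objects.

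For the $1$-cell and $2$-cell parts I would unpack the lax-morphism and transformation structure of $\mathsf T_\sigma$ against the same presentation. The $\mathsf S$-component of the single lax structure $2$-cell of a lax $\mathsf T_\sigma$-morphism is exactly the laxator pair $(\mul{F}{-}{-},\munit{F})$ with its associativity and unit coherences (genuinely lax, since this cell is not required invertible); its coherence against the binary generator $\dist$ reduces to the compatibility \textnormal{(S$\dist$)} of Definition~\ref{def:s-lax-functor}, and its coherence against the unary generator $\Lambda$ reduces to the strict law \textnormal{(S$\Lambda$)}. Dually, a $\mathsf T_\sigma$-transformation is a natural transformation whose $\mathsf S$-compatibility is the tensor/unit condition of Definition~\ref{def:mon-nat} and whose compatibility with the generators is automatic---for $\Lambda$ this is exactly the consequence recorded after that definition. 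Assembling these identifications into an equivalence of hom-categories for each ordered pair of objects, together with the essential surjectivity above, delivers the biequivalence and the asserted descriptions of lax morphisms and transformations.

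The hard part will be the honest treatment of the unary generator $\Lambda$, which lives on the identity functor and is an endo-natural-transformation, not an endofunctor (cf.\ Remark~\ref{rem:scope-SLambda}). I must verify that the lax-morphism coherence attached to this generator genuinely reduces to the \emph{strict} equation \textnormal{(S$\Lambda$)} rather than to an invertible comparison: in the ambient $2$-category $\mathbf{Cat}$ there is simply no $3$-cell available to mediate between the $2$-cells $\Lambda'F$ and $F\Lambda$, which is the formal shadow of the typing discussion in Remark~\ref{rem:scope-SLambda} and is exactly what forces strictness (and hence, via Theorem~\ref{thm:T1-2cat}, a strict $2$-category rather than a bicategory). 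A secondary bookkeeping point is to confirm that the comparison respects composition of lax morphisms despite the pseudo weakness of the base $\mathsf S$-structure; here I would use that composition of lax morphisms for an algebraic $2$-monad is defined on the nose, matching the on-the-nose composite laxators of the Notation and Theorem~\ref{thm:T1-2cat}.
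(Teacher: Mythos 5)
Your proposal follows essentially the same route as the paper's proof sketch: interpret the $\mathsf S$--part by the underlying (pseudo) monoidal structure, interpret the adjoined generators by $\sigma$ and $\Lambda$ so that \textnormal{(D1)}--\textnormal{(D4)} become the imposed algebraic relations, and unpack lax $\mathsf T_\sigma$--morphisms and $\mathsf T_\sigma$--transformations to recover \textnormal{(S$\Lambda$)}, \textnormal{(S$\sigma$)}, and the monoidal transformation axioms. Your added attention to the hom-category equivalences and to why the coherence for the unary generator $\Lambda$ must be a strict equality (echoing Remark~\ref{rem:scope-SLambda}) is a welcome refinement of the paper's sketch, not a departure from it.
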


\begin{proof}[Proof sketch]
Given a distorted monoidal category $(\C,\otimes,\unit,\alpha,\lambda,\rho,\Lambda,\sigma)$, interpret the $\mathsf S$–part by the underlying monoidal structure and interpret the new operation symbols by $\Lambda$ and $\sigma$. 
Axioms \textnormal{(D1)}–\textnormal{(D4)} then witness the required algebraic equalities, so $(\C,\dots)$ is a $\mathsf T_\sigma$–pseudoalgebra.
Conversely, unpacking a $\mathsf T_\sigma$–pseudoalgebra yields exactly the data and axioms of Definition~\ref{def:distmoncat}.
A lax $\mathsf T_\sigma$–morphism is by definition a functor $F$ equipped with structure maps $\mul{F}{X}{Y}$ and $\munit{F}$ satisfying the lax associativity and unit axioms; the extra algebraic equations in $\mathsf T_\sigma$ force \textnormal{(S$\Lambda$)} and \textnormal{(S$\sigma$)}. 
Transformations are componentwise the usual monoidal natural transformations. 
\end{proof}

\begin{corollary}\label{cor:strict2cat-from-Tsigma}
Let $\mathbf{DistMon}_\sigma$ be the 2\mbox{-}category whose objects are $\mathsf T_\sigma$–pseudoalgebras, 1\mbox{-}cells are lax $\mathsf T_\sigma$–morphisms, and 2\mbox{-}cells are $\mathsf T_\sigma$–transformations. 
Then horizontal and vertical compositions are given by the usual whiskering and functor composition, and the composite laxators are
\[
\mul{GF}{X}{Y}:=G\!\big(\mul{F}{X}{Y}\big)\circ \mul{G}{FX}{FY},
\qquad
\munit{GF}:=G(\munit{F})\circ \munit{G}.
\]
In particular,
\[
\mul{H(GF)}_{X,Y}=\mul{(HG)F}_{X,Y}
\quad\text{and}\quad
\munit{H(GF)}=\munit{(HG)F},
\]
and the interchange law for 2\mbox{-}cells holds as an equality. 
Thus Theorem~\ref{thm:T1-2cat} is recovered as the concrete instance of the general $\mathsf T_\sigma$–picture.
\end{corollary}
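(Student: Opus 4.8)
The plan is to obtain Corollary~\ref{cor:strict2cat-from-Tsigma} as a direct consequence of the biequivalence of Proposition~\ref{prop:Tsigmaalgebras} together with the standard 2\mbox{-}monad formalism of \cite{Lack2010}, and then to match the resulting abstract compositions against the explicit formulas. First I would recall that for any 2\mbox{-}monad on a strict 2\mbox{-}category—here $\mathsf T_\sigma$ on $\mathbf{Cat}$—the pseudoalgebras, lax $\mathsf T_\sigma$–morphisms, and $\mathsf T_\sigma$–transformations organize into a genuine (strict) 2\mbox{-}category, with horizontal composition of 1\mbox{-}cells given by composing underlying functors and pasting their mediating structure 2\mbox{-}cells, and with vertical and horizontal composition of 2\mbox{-}cells as in $\mathbf{Cat}$. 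This produces the 2\mbox{-}category $\mathbf{DistMon}_\sigma$ of the statement once objects, 1\mbox{-}cells, and 2\mbox{-}cells are transported along Proposition~\ref{prop:Tsigmaalgebras}.

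The crux is to unpack the abstract composite of two lax $\mathsf T_\sigma$–morphisms and show it yields the displayed laxators. Under the biequivalence, the structure 2\mbox{-}cell of a lax morphism $F$ has components that are precisely the laxators $\mul{F}{X}{Y}$ and the unit comparison $\munit{F}$. The 2\mbox{-}monad composition law forms the composite structure 2\mbox{-}cell by whiskering the structure cell of $F$ by $G$ and the structure cell of $G$ by the appropriate functor, then composing vertically; reading this componentwise gives exactly
\[
\mul{GF}{X}{Y}=G(\mul{F}{X}{Y})\circ \mul{G}{FX}{FY},\qquad \munit{GF}=G(\munit{F})\circ \munit{G}.
\]
I would verify this by tracking the direction and whiskering of the mediating cells explicitly, which is the only place where care is genuinely required.

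For the strictness claims I would argue that all the weakness of the situation is confined to the \emph{object} (pseudoalgebra) structure—the invertible coherence cells for associativity and unit of the tensor—whereas \emph{composition of 1\mbox{-}cells and 2\mbox{-}cells} is pasting in the strict 2\mbox{-}category $\mathbf{Cat}$, where functor composition is strictly associative and unital and the middle\mbox{-}four interchange holds on the nose. Consequently the two bracketings $H(GF)$ and $(HG)F$ have literally equal underlying functors, and their composite structure 2\mbox{-}cells arise from the same pasting diagram; hence
\[
\mul{H(GF)}{X}{Y}=\mul{(HG)F}{X}{Y},\qquad \munit{H(GF)}=\munit{(HG)F}
\]
as equalities, and the interchange law for 2\mbox{-}cells is an equality. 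Equivalently, and more concretely, these are exactly the identities already established by direct typed computation in Theorem~\ref{thm:T1-2cat}, clauses (5) and (6); the present corollary simply reinterprets those equalities as the concrete shadow of the general $\mathsf T_\sigma$–picture.

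The main obstacle I anticipate is not any calculation but the conceptual bookkeeping of the matching step: confirming that the abstract lax\mbox{-}morphism composition of \cite{Lack2010} really does specialize, component by component and in the correct variance, to $G(\mul{F}{X}{Y})\circ \mul{G}{FX}{FY}$, and—more delicately—that passing from strict algebras to \emph{pseudo}algebras does not disturb the strict associativity of 1\mbox{-}cell composition. Once it is isolated that the pseudoalgebra coherences enter only through the axioms the structure cells must satisfy, and never through the formula for composing them, the on\mbox{-}the\mbox{-}nose equalities become automatic and the corollary follows.
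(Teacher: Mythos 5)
Your proposal is correct and follows essentially the same route as the paper, which states this corollary without a separate proof, relying precisely on the combination you describe: the identification of Proposition~\ref{prop:Tsigmaalgebras}, the general fact that pseudoalgebras, lax morphisms, and transformations for a 2-monad form a strict 2-category, and the explicit componentwise verifications already carried out in Theorem~\ref{thm:T1-2cat}, clauses (5) and (6). Your added care about matching the abstract pasting composite to the formula $G(\mu^F_{X,Y})\circ\mu^G_{FX,FY}$ and about weakness being confined to the pseudoalgebra structure is exactly the bookkeeping the paper leaves implicit.
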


\begin{remark}[Strict versus pseudo]
Earlier informal wording in the Introduction suggested that “strict algebras are monoidal categories.” 
Precisely: strict $\mathsf S$–algebras are \emph{strict} monoidal categories, while (weak) monoidal categories are \emph{pseudo} $\mathsf S$–algebras \cite{Lack2010}. 
Our development uses $\mathsf T_\sigma$–\emph{pseudo}algebras for objects and \emph{lax} $\mathsf T_\sigma$–morphisms for 1\mbox{-}cells; this choice matches the concrete laxator calculus used throughout the paper and explains the on-the-nose equalities for composite laxators.
\end{remark}

\subsection{Strict $\Lambda$–compatibility: consequences and recognition principles}

The axiom (S$\Lambda$) forces $F(\Lambda)=\Lambda' F$ \emph{as an equality}. This narrows the class of admissible 1\mbox{-}morphisms but yields strict 2\mbox{-}categorical composition. Two basic observations clarify when nontrivial $\Lambda$ exist.

\begin{proposition}[Triviality of monoidal endo\texorpdfstring{$\Id$}{Id} on \texorpdfstring{$\mathbf{Vect}_k$}{Vect}]\label{prop:Lambda-vect}
Let $\C=\mathbf{Vect}_k$ with the usual tensor. If $\Lambda:\Id_\C\Rightarrow\Id_\C$ is natural and satisfies (D4) and $\Lambda_{\mathbf{I}}=\Id$, then $\Lambda=\Id$. 
\end{proposition}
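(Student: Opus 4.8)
The plan is to exploit the standard description of natural endomorphisms of the identity functor on $\mathbf{Vect}_k$: every such transformation is given by a single scalar acting uniformly. Once that is established, the unit normalization (and, consistently, (D4)) pins the scalar down to $1$.

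First I would compute $\Lambda$ on the tensor unit. Evaluating the natural transformation at the object $V=k=\unit$ yields a $k$-linear endomorphism $\Lambda_k:k\to k$, which must be multiplication by a unique scalar $c\in k$; that is, $\Lambda_k=c\cdot\Id_k$. Next I would propagate this scalar to an arbitrary object. For any $V$ and any vector $v\in V$, let $\iota_v:k\to V$ denote the linear map $1\mapsto v$. Naturality of $\Lambda$ at $\iota_v$ gives $\Lambda_V\circ\iota_v=\iota_v\circ\Lambda_k$, and evaluating both sides at $1\in k$ yields $\Lambda_V(v)=\iota_v(c)=c\,v$. Since $v$ is arbitrary, $\Lambda_V=c\cdot\Id_V$ with the same scalar $c$ for every $V$. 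I would stress that this step uses only morphisms out of $k$, so it is uniform in $V$ and covers infinite-dimensional and zero spaces without any change of argument.

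Finally I would read off the conclusion from the hypotheses. The unit normalization $\Lambda_\unit=\Id$, i.e.\ $\Lambda_k=\Id_k$, forces $c=1$, and hence $\Lambda_V=\Id_V$ for all $V$, as required. For completeness I would note that (D4) applied to the scalar form gives $c\cdot\Id_{V\otimes W}=(c\,\Id_V)\otimes(c\,\Id_W)=c^2\cdot\Id_{V\otimes W}$, so $c\in\{0,1\}$; the normalization then selects $c=1$. The two hypotheses are thus mutually consistent, and either one combined with the scalar characterization suffices.

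The argument is short and I do not anticipate a genuine obstacle; the only point requiring care is the scalar characterization itself, namely verifying that the maps-out-of-$k$ naturality argument legitimately forces a \emph{single} scalar $c$ across \emph{all} objects simultaneously, rather than merely establishing scalarity object by object. Securing this uniformity is the crux, and it is exactly what the map $\iota_v$ delivers, since it links $\Lambda_V$ on an arbitrary $V$ back to the fixed value $\Lambda_k=c\cdot\Id_k$.
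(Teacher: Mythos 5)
Your proof is correct and takes essentially the same route as the paper's: naturality forces $\Lambda_V=c\cdot\Id_V$ for a single scalar $c$ uniform over all objects, and the hypotheses then pin $c$ to $1$. You additionally supply the maps-out-of-$k$ argument that the paper's sketch leaves implicit, and you rightly note that $\Lambda_{\mathbf{I}}=\Id$ alone already forces $c=1$ (the paper instead first uses (D4) to get $c\in\{0,1\}$), so both versions are sound.
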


\begin{proof}[Proof sketch]
Naturality forces $\Lambda_V=c\cdot \Id_V$ for some $c\in k$ independent of $V$. Monoidality gives
\[
\Lambda_{V\otimes W}=\Lambda_V\otimes\Lambda_W \quad\Rightarrow\quad c\,\Id_{V\otimes W}=c^2\,\Id_{V\otimes W}.
\]
Hence $c\in\{0,1\}$. The unit condition $\Lambda_{\mathbf{I}}=\Id$ forces $c=1$.
\end{proof}

Thus in $\mathbf{Vect}_k$ there is no nontrivial $\Lambda$ compatible with (D4). By contrast, Example~\ref{ex:N-graded-Lambda} exhibits many nontrivial $\Lambda$ in graded settings.

\begin{proposition}[Existence in graded contexts]\label{prop:Lambda-graded}
Let $\C$ be a monoidal category whose objects carry a grading by a commutative monoid $(M,+,0)$ such that $(V\otimes W)_m=\bigoplus_{i+j=m}V_i\otimes W_j$ and $\mathbf{I}$ is concentrated in degree $0$. For any monoid morphism $\chi:M\to (R,\cdot,1)$ into the multiplicative monoid of a commutative ring $R$, the assignment
\[
\Lambda^{(\chi)}_V\big|_{V_m}:=\chi(m)\cdot \Id_{V_m}
\]
defines a natural $\Lambda^{(\chi)}$ with $\Lambda^{(\chi)}_{\mathbf{I}}=\Id$ and $\Lambda^{(\chi)}_{V\otimes W}=\Lambda^{(\chi)}_V\otimes \Lambda^{(\chi)}_W$. Hence (D4) holds.
\end{proposition}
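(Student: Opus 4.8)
The plan is to verify the three asserted properties—naturality of $\Lambda^{(\chi)}$ as an endo-transformation of $\Id_\C$, the unit normalization $\Lambda^{(\chi)}_{\mathbf{I}}=\Id$, and multiplicativity $\Lambda^{(\chi)}_{V\otimes W}=\Lambda^{(\chi)}_V\otimes\Lambda^{(\chi)}_W$—by working degree by degree, exactly as in the special case $M=\mathbb{N}$, $\chi(n)=t^n$ of Example~\ref{ex:N-graded-Lambda}, and then to read off (D4) directly from multiplicativity. The only structural inputs I need are the two defining equations of a monoid morphism, namely $\chi(0)=1$ and $\chi(i+j)=\chi(i)\chi(j)$, together with the compatibility $(V\otimes W)_m=\bigoplus_{i+j=m}V_i\otimes W_j$ of the grading with $\otimes$ and the concentration of $\mathbf{I}$ in degree $0$.

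First I would establish naturality. Given a morphism $f:V\to V'$, the ambient category is one of graded objects whose maps preserve the grading, so $f$ restricts to $f|_{V_m}:V_m\to V'_m$ for each $m$. On the summand $V_m$ the two composites $f\circ\Lambda^{(\chi)}_V$ and $\Lambda^{(\chi)}_{V'}\circ f$ both equal $\chi(m)\cdot f|_{V_m}$: the scalar $\chi(m)$ is central, and $f$ lands in $V'_m$ where $\Lambda^{(\chi)}_{V'}$ again acts by $\chi(m)$. Summing over $m$ yields the naturality square. The unit normalization is then immediate, since $\mathbf{I}$ is concentrated in degree $0$ and $\Lambda^{(\chi)}_{\mathbf{I}}$ therefore acts by the scalar $\chi(0)=1$, giving $\Lambda^{(\chi)}_{\mathbf{I}}=\Id$.

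Next I would verify multiplicativity on each homogeneous component. On the summand $V_i\otimes W_j$ inside $(V\otimes W)_{i+j}$, the tensor $\Lambda^{(\chi)}_V\otimes\Lambda^{(\chi)}_W$ acts by the product $\chi(i)\chi(j)$, whereas $\Lambda^{(\chi)}_{V\otimes W}$ acts by $\chi(i+j)$ on the whole degree-$(i+j)$ part. The monoid-morphism identity $\chi(i+j)=\chi(i)\chi(j)$ equates these scalars on every summand; taking the direct sum over all $(i,j)$ with $i+j=m$ shows the two endomorphisms agree on each $(V\otimes W)_m$, hence on all of $V\otimes W$. This equality $\Lambda^{(\chi)}_{V\otimes W}=\Lambda^{(\chi)}_V\otimes\Lambda^{(\chi)}_W$ is exactly the content of (D4), so together with $\Lambda^{(\chi)}_{\mathbf{I}}=\Id$ the proposition follows.

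The argument is a routine degree-wise computation and I anticipate no serious obstacle. The one point requiring care is the implicit hypothesis that morphisms preserve the grading—without which the naturality square can fail, as a non-homogeneous map need not commute with a non-constant diagonal scaling. The essential content is entirely packaged in the two defining equations of $\chi$, which convert the \emph{additive} bookkeeping of degrees under $\otimes$ into the \emph{multiplicative} bookkeeping of scalars under $\chi$.
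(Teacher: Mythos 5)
Your proof is correct and follows essentially the same route as the paper, which states Proposition~\ref{prop:Lambda-graded} without a separate proof and relies on the degree-wise computation already displayed in Example~\ref{ex:N-graded-Lambda} (the case $M=\mathbb{N}$, $\chi(m)=t^m$); your argument is precisely that computation with $t^i t^j = t^{i+j}$ replaced by $\chi(i)\chi(j)=\chi(i+j)$ and $\chi(0)=1$. Your explicit attention to the hypothesis that morphisms preserve the grading (needed for naturality) is a worthwhile addition that the paper leaves implicit.
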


Example~\ref{ex:N-graded-Lambda} is the case $M=\mathbb{N}$ and $\chi(m)=t^m$. This recognition principle explains why attempting to force a nontrivial $\Lambda$ in $\mathbf{Set}$ or ungraded $\mathbf{Vect}_k$ fails, correcting the earlier flawed set\mbox{-}based illustration.

\subsection{Oplax and pseudo variants; a double\texorpdfstring{$-$}{-}categorical envelope}

Our development used lax 1\mbox{-}cells. One can dually consider \emph{oplax} $\sigma$–monoidal functors with structure maps $F(X\otimes Y)\to FX\otimes' FY$ and the evident distortion compatibilities. There is a natural \emph{double category} $\mathbb{DistMon}_\sigma$ whose objects are distorted monoidal categories, horizontal arrows are lax $\sigma$–monoidal functors, vertical arrows are oplax ones, and squares are monoidal natural transformations (whiskered appropriately). Its horizontal and vertical bicategories recover the lax and oplax worlds, respectively (cf. \cite{DayStreet1997,Shulman2008}). Replacing (S$\Lambda$) by an \emph{isomorphism} $\epsilon:\Lambda'F\Rightarrow F\Lambda$ and adding the standard coherence axioms produces bicategories rather than a strict 2\mbox{-}category; this mirrors the passage from strict to weak monoidal categories and sits comfortably within the 2\mbox{-}monad framework \cite{Lack2010}.

\subsection{Coherence program}

A Mac\,Lane\mbox{-}style coherence theorem for distorted monoidal categories would state that every well\mbox{-}typed diagram built from $\alpha,\lambda,\rho,\sigma,\Lambda$ and the axioms (D1)–(D4) commutes. Even in the strict monoidal case ($\alpha=\lambda=\rho=\Id$) this is nontrivial because $\sigma$ need not be invertible. The braided hexagons are classically equivalent to the Yang–Baxter equation when $\beta$ is an isomorphism \cite{JoyalStreet1993}; in our non\mbox{-}invertible setting the typed hexagons furnish \emph{oriented} Yang–Baxter relations. A promising route is to view (D3) as a terminating, locally confluent rewriting system on string diagrams enriched by the idempotent data that appear in Proposition~\ref{prop:twist-by-e}. Establishing global confluence would yield normal forms and hence coherence. We expect that additional mild hypotheses (e.g.\ stability of chosen idempotents under tensor, as in (E2)) suffice to prove such a result. 

\subsection{Applications and case studies}

The constructions of Section~\ref{sec:examples} provide reusable patterns.

\begin{itemize}
\item \textbf{Idempotent twists of braidings} (Proposition~\ref{prop:twist-by-e}): starting from any braided category, a normalized, multiplicative idempotent family $e$ yields a non\mbox{-}invertible $\sigma=\beta\circ e$ satisfying (D1)–(D3). This covers irreversible rewiring in categorical quantum mechanics while retaining compositional control.
\item \textbf{Graded $\Lambda$'s} (Example~\ref{ex:N-graded-Lambda}): monoidal degree functions give families $\Lambda^{(\chi)}$ implementing time\mbox{-}or resource\mbox{-}aware scalings; these interact well with either symmetric or twisted $\sigma$.
\item \textbf{Directed models} (Example~\ref{ex:super-e}): parity\mbox{-}filtered idempotents produce lossy swaps that are still coherent. This aligns with directed homotopy motivations \cite{Grandis2009}.
\end{itemize}

These case studies indicate that distorted monoidal categories provide a disciplined middle ground between symmetric flexibility and directed irreversibility, with axioms designed to keep diagrammatic reasoning intact.

\subsection{Open problems}

We collect concrete problems suggested by the above analysis.

\begin{enumerate}
\item \textbf{Coherence}: Prove a Mac\,Lane\mbox{-}style coherence theorem for distorted monoidal categories. First target the strict case; then propagate to the weak case via transport of structure.
\item \textbf{Centers and half\mbox{-}braidings}: Characterize the lax center and lax half\mbox{-}braidings in $\mathbf{DistMon}_\sigma$ and relate them to classical centers in braided/symmetric contexts \cite{JoyalStreet1993,DayStreet1997}.
\item \textbf{2\mbox{-}monad semantics}: Give an explicit presentation of the 2\mbox{-}monad $T_\sigma$ and its Kleisli and Eilenberg–Moore objects, and identify when weak morphisms are created by the forgetful 2\mbox{-}functor to $\mathbf{Cat}$ \cite{Lack2010}.
\item \textbf{Enrichment}: Develop $\V$–enriched distorted monoidal categories for a distorted base $\V$, and determine when enrichment lifts along $\sigma$–lax functors.
\item \textbf{Homotopical algebra}: Construct model structures on categories of distorted monoidal categories in which weak equivalences reflect underlying categorical equivalences, and analyze Quillen adjunctions induced by idempotent twists.
\item \textbf{Computational tools}: Implement normal\mbox{-}form and equality checking for the typed hexagons and (S$\sigma$) in a proof assistant, using the explicit constructive proofs in Section~\ref{sec:main}.
\end{enumerate}

\subsection{Concluding remarks}

The strict 2\mbox{-}category $\mathbf{DistMon}_\sigma$ furnishes a minimal yet expressive calculus for irreversible tensorial phenomena. The typed hexagons and strict $\Lambda$–compatibility isolate the algebraic heart of directed interchange while keeping composition strictly associative and unital. The examples show that the framework is neither vacuous nor ad hoc: nontrivial $\Lambda$ appear naturally in graded settings, and non\mbox{-}invertible $\sigma$ arise from normalized idempotent twists of classical braidings. The next steps are to lift the strict theory to a robust weak setting and to establish coherence theorems that justify large\mbox{-}scale diagrammatic derivations. The present strict core already supports formalization and mechanized reasoning \cite{Lack2010,DayStreet1997}, and we expect it to serve as a stable substrate for further generalizations.

\section*{Acknowledgments}

The author thanks anonymous reviewers for helpful comments and suggestions. This work was supported in part by the IOTT Project.

\end{document}